\newtheorem{rem}{Remark}
\begin{document}

\title{On the proximal point algorithms for solving the monotone inclusion problem\thanks{This research was supported by the National Natural Science Foundation of China under
grant 12171021, and the Fundamental Research Funds for the Central Universities.}}


\titlerunning{Proximal point algorithms}        

\author{Tao Zhang \and Shiru Li \and Yong Xia 
}

\institute{Tao Zhang \and Shiru Li \and Yong Xia (corresponding author) \at School of Mathematical Sciences, Beihang University, Beijing 100191, People's Republic of China.
	\email{\{shuxuekuangwu,\ lishiru,\ yxia\}@buaa.edu.cn}
	  }
\date{Received: date / Accepted: date}
\maketitle


\begin{abstract}
We consider finding a zero point of the maximally monotone operator $T$. First, instead of using the proximal point algorithm (PPA) for this purpose, we  employ PPA to solve its Yosida regularization $T_{\lambda}$. Then, based on an $O(a_{k+1})$ ($a_{k+1}\geq \varepsilon>0$) resolvent index of $T$, it turns out that we can establish a convergence rate of $O (1/{\sqrt{\sum_{i=0}^{k}a_{i+1}^2}})$ for both the $\|T_{\lambda}(\cdot)\|$ and the gap function $\mathtt{Gap}(\cdot)$ in the non-ergodic sense, and   $O(1/\sum_{i=0}^{k}a_{i+1})$ for $\mathtt{Gap}(\cdot)$ in the ergodic sense. Second, to enhance the convergence rate of the newly-proposed PPA,  we introduce an accelerated variant called the Contracting PPA. By utilizing a resolvent index of $T$ bounded by $O(a_{k+1})$ ($a_{k+1}\geq \varepsilon>0$), we establish a convergence rate of $O(1/\sum_{i=0}^{k}a_{i+1})$ for both $\|T_{\lambda}(\cdot)\|$ and
  $\mathtt {Gap}(\cdot)$, considering the non-ergodic sense.
Third, to mitigate the limitation that the Contracting PPA lacks a convergence guarantee,
 we propose two additional versions of the algorithm.  These novel approaches not only ensure guaranteed convergence but also provide sublinear and linear convergence rates for both $\|T_{\lambda}(\cdot)\|$ and  $\mathtt {Gap}(\cdot)$, respectively, in the non-ergodic sense.

\keywords{Proximal point algorithm \and monotone inclusion problem \and Yosida regularization \and convergence rate}

\subclass{47H09, 47H10, 90C25, 90C30}
 \end{abstract}

\section{Introduction}
Let $\mathcal{H}$ be a real Hilbert space equipped with an inner product $\langle \cdot,\cdot \rangle$ and the corresponding norm $\|\cdot\|$.
Let $T:\mathcal{H}\rightarrow {\mathcal{H}}$ be a maximally monotone operator. Assume that $T^{-1}(0)$ is nonempty and bounded. We aim to solve the monotone inclusion problem:
\begin{equation}\label{B}
{\rm find} ~x\in \mathcal{H} ~{\rm such ~that} ~0\in Tx.
\end{equation}

The resolvent operator of $T$ is defined as
\[
	J_{\lambda T}=(I+\lambda T)^{-1},
\]
where the key parameter $\lambda>0$ is known as the resolvent index of $T$. The corresponding Yosida regularization of $T$ is then defined as
 \begin{eqnarray}\label{BY}
 	T_{\lambda}=\frac{1}{\lambda}(I-J_{\lambda T}).
 \end{eqnarray}
It is well-known that $T_{\lambda}$ is $\lambda$-cocoercive, i.e.,
 \[
 \langle T_{\lambda}(x)-T_{\lambda}(y),x-y\rangle\geq \lambda\|T_{\lambda}(x)-T_{\lambda}(y)\|^2,~~\forall x,y\in\mathcal{H},
 \]
which implies that $T_{\lambda}$ is monotone and continuous. For more properties about maximally monotone operators and Yosida regularization, we refer to \cite{Bauschke2011Convex}.

Now we can equivalently rewrite problem  \eqref{B} as
 \begin{equation}\label{BB}
 {\rm find} ~x\in \mathcal{H} ~{\rm such ~that} ~0= T_{\lambda}(x).
 \end{equation}
Based on the  monotonicity and continuity of $T_{\lambda}$, problem \eqref{BB}  is also equivalent to
 \begin{equation}\label{BBB}
 {\rm find} ~x\in \mathcal{H} ~{\rm such ~that} ~\langle  y-x, T_{\lambda}(y)   \rangle\geq0,~\forall y\in\mathcal{H}.
 \end{equation}
By utilizing these two formulations \eqref{BB} and \eqref{BBB}, we introduce the following two metrics
to measure the quality of the approximate solution of \eqref{B}:
 \begin{itemize}
 	\item $\|T_{\lambda}(x)\|$;
 	\item $\mathtt {Gap}(x):=\max\limits_{y\in\mathbb{B}(x_0,D)}\{ \langle  x-y, T_{\lambda}(y)   \rangle\}$,
 \end{itemize}
 where  $\mathbb{B}(x_0,D):=\{ y\in \mathcal{H}:\|y-x_0\|\leq D\}$ and $x_0\in \mathcal H$.
Clearly, $x\in\mathcal{H}$ is a solution of \eqref{B} if and only if  $\|T_{\lambda}(x)\|=0$. Moreover, as shown in \cite[Lemma 1]{nesterov2007dual}, if  $\mathtt {Gap}(x)=0$ and $\|x_0-x\|<D$, then $x$ is a solution of \eqref{BBB}; If $x$ is a solution to \eqref{BBB} satisfying $\|x_0-x \|\leq D$, then $\mathtt {Gap}(x)=0$.

\subsection{Prior works}
A typical application of problem \eqref{B} is when we set $T=\partial f$, where $f$ is a closed, proper, and convex function in $\mathcal{H}$ and $\partial f$ represents the subgradient of $f$. In this case, problem \eqref{B}
is equivalent to  the  convex optimization problem:
\begin{equation}\label{f}
\min\limits_{x\in \mathcal{H}} f(x).
	\end{equation}
The proximal point algorithm (PPA), which was originally introduced by Martinet \cite{martinet1970regularisation}, is a fundamental iterative procedure for solving \eqref{f}:
\begin{equation}\label{PPAf}
x_{k+1}=J_{a_{k+1} \partial f}(x_k)=\arg\min\limits_{x}\left\{f(x)+\frac{1}{{{2}}a_{k+1}}\|x-x_k\|^2\right\},~a_{k+1}>0.
\end{equation}
Starting from any initial point $x_0$ satisfying $\|x_0-x_*\|\leq R$,
the following convergence rate of PPA \eqref{PPAf},
$$
f(x_{k+1})-f(x_*)\leq \frac{R^2}{2\sum_{i=0}^{k}a_{i+1}},
$$
was first established by G\"{u}ler \cite[Theorem 2.1]{guler1991convergence}.
Taylor et al. \cite[Theorem 4.1]{taylor2017exact} improved this result to
$$
f(x_{k+1})-f(x_*)\leq \frac{R^2}{4\sum_{i=0}^{k}a_{i+1}}.
$$
To accelerate the  convergence  rate of PPA \eqref{PPAf}, Doikov and Nesterov \cite{doi:10.1137/19M130769X}
introduced the Contracting  proximal  method (CPM) for solving problem \eqref{f}:
\begin{equation}\label{CPM}\tag{CPM}
\begin{cases}
A_{k+1}=a_{k+1}+A_k,~~a_{k+1}>0,\\
\hat{x}_{k+1}=\arg\min\limits_{x}\left\{
A_{k+1}f\left(
\frac{a_{k+1}x+A_kx_k}{A_{k+1}}
\right)+\frac{1}{2\beta}\|x-\hat x_k\|^2\right\},\\
x_{k+1}=\frac{a_{k+1}\hat x_{k+1}+A_kx_k}{A_{k+1}}.
\end{cases}
\end{equation}
The inspiration for this approach came from Nesterov's accelerated gradient method \cite{nesterov1983method} (see \cite{luo2022} for an explanation of second-order differential equations). The sequence generated by \ref{CPM}  has a convergence rate such that
$$f(x_{k+1})-f(x_*)\leq O(1/A_{k+1}).$$
Based on simple calculations, we can rewrite \ref{CPM} as:
\begin{equation}\label{CPPA1}
\begin{cases}
y_k=x_k+\frac{a_{k+1}A_{k-1}}{a_{k}A_{k+1}}(x_{k}-x_{k-1}),\\
{x}_{k+1}=\arg\min\limits_{x}\left\{
f\left(
x
\right)+\frac{A_{k+1}}{2\beta a_{k+1}^2}\|x-y_k\|^2\right\},
\end{cases}
\end{equation}
which can be considered as an inertial type algorithm. In fact,  \ref{CPM} includes  the accelerated PPA proposed by G\"{u}ler \cite{guler1992new} as a special case.  It is important to note that if we set
$A_{k+1}=a_{k+1}^2$ in \eqref{CPPA1} (i.e., fixing the proximal index $A_{k+1}/(2\beta a_{k+1}^2)$), it can be easily verified that $A_{k+1}=O(k^2)$.  This implies that
the convergence rate of the  Contracting  proximal  method  \eqref{CPPA1} with a fixed proximal index is $O(1/k^2)$ compared to PPA \eqref{PPAf} with a fixed proximal index, which has a convergence rate of  $O(1/k)$. Table \ref{tab1} provides a clear comparison between PPA \eqref{PPAf} and  \ref{CPM} (or \eqref{CPPA1}).
\begin{table}
	\caption{PPA and its accelerated version \eqref{CPM}  for solving problem \eqref{f}}\label{tab1}
	\begin{center}\footnotesize
		\renewcommand{\arraystretch}{1.3}
		\begin{tabular}{ccc}\hline
			&
			Resolvent index of $\partial f$ & 	
			Convergence  rate \\		\hline
			PPA \eqref{PPAf} & $O(a_{k+1})$ &{$O\left({1}/{\sum_{i=0}^{k}}a_{i+1} \right)$}	\\ \hline
			\ref{CPM}  \cite{doi:10.1137/19M130769X}  &$O\left({a_{k+1}^2}/{A_{k+1}}\right) $&$O\left({1}/{\sum_{i=0}^{k}}a_{i+1} \right)$\\		\hline
		\end{tabular}
	\end{center}
\end{table}

The Contracting proximal method for solving problem \eqref{f} will serve as the primary motivation behind the extended approaches presented in this paper, aimed at accelerating the PPA for solving problem \eqref{B}.

Rockafellar \cite{rockafellar1976monotone} generalized PPA to solve the monotone inclusion problem \eqref{B}:
 \begin{equation}\label{PPA}
 x_{k+1}=J_{a_{k+1} T}(x_k).
 \end{equation}
Nowadays, many well-known  algorithms can  be considered as PPA for solving maximally monotone operators. These include  augmented Lagrangian method,  the proximal method of multipliers \cite{rockafellar1976augmented}, the Douglas-Rachford splitting method \cite{douglas1956numerical,lions1979splitting}, the alternating direction method of multipliers  \cite{gabay1976dual,glowinski1975}, and the primal-dual hybrid gradient method \cite{chambolle2011first,chambolle2016ergodic}.

For the convergence rate of PPA for solving problem \eqref{B} with $a_k\equiv\lambda>0$ ($k\geq 0$),
it was shown in \cite[Proposition 8]{brezis1978produits} that
\[
\|x_{k+1}-x_k\|\leq  \frac{R}{\sqrt{k+1}} ~~{\rm or}~~\|T_{\lambda}(x_k)\|\leq
\frac{R}{\sqrt{k+1}}.
\]
Recently,  Gu and Yang \cite{gu2020tight} extended this result to
 $$
 \|x_{k+1}-x_k\|\leq \sqrt{\Big(1-\frac{1}{k+1}\Big)^{k}\frac{R^2}{k+1}}~~{\rm or}~~\|T_{\lambda}(x_k)\|\leq \sqrt{\Big(1-\frac{1}{k+1}\Big)^{k}\frac{R^2}{k+1}},
 $$
when dim$\mathcal{H}\geq2$.
The convergence rate of PPA \eqref{PPA} with time-varying values of $a_k$ was established by Dong \cite{dong2014proximal}:
\begin{equation*}
\|T_{a_{k+1}}(x_{k+1})\|\leq O\left(1/{\sqrt{\sum_{i=0}^{k}a_{i+1}^2}}\right).
	\end{equation*}

In order to accelerate the convergence rate of PPA for solving problem \eqref{B},
Kim \cite{kim2021accelerated} introduced the following iterative procedure:
 \begin{equation}\label{APPA}\tag{Accelerated PPA}
 	\begin{cases}
 	x_{k+1}=J_{\lambda T}(z_k),\\
 	z_{k+1}=x_{k+1}+\frac{k}{k+2}(x_{k+1}-x_k)+\frac{k}{k+2}(z_{k-1}-x_k),
 	\end{cases}
 \end{equation}
 and showed that the generated sequence satisfies that
 \[
 \|T_{\lambda}(x_k)\|\leq O(1/k).
 \]
It was shown in \cite[Proposition 4.3]{contreras2023optimal} and \cite[Chapter 12.2, Theorem 18]{ryu2022large} that   \ref{APPA} is equivalent to the Halpern iteration \cite{halpern1967fixed} due to Lieder \cite{lieder2021convergence}.

Motivated by the second-order dynamical system,   Attouch and Peypouquet  \cite{2017Convergence} presented the following Regularized Inertial Proximal  Algorithm (RIPA) for solving problem \eqref{B}:
 \begin{equation}\label{RIPA}\tag{RIPA}
 	\begin{cases}
 	\lambda_k=(1+\epsilon)\frac{s}{\alpha^2}k^2,~s>0,~\epsilon>0,~\alpha>2,\\
 	z_k=x_k+\left(1-\frac{\alpha}{k}\right)(x_k-x_{k-1}),\\
 	x_{k+1}=\frac{\lambda_k}{\lambda_k+s}z_k+\frac{s}{\lambda_k+s}J_{(\lambda_k+s)T}(z_k),
 	\end{cases}
 \end{equation}
 and showed that
 \[
 \|T_{\lambda_k+s}(x_k)\|=o(1/k).
 \]
Another second-order dynamical system based algorithm, proposed by Attouch and L\'{a}szl\'{o} \cite{attouch2020newton},  is the following Proximal Regularized Inertial Newton Algorithm (PRINA):
 \begin{equation}\label{PRINA}\tag{PRINA}
 \begin{cases}
\alpha_k=\frac{rk+q-1}{r(k+1)+q},~\lambda_k=\lambda k^2,~r>0,~\lambda>\frac{(2\beta+s)^2r^2}{s},~q\in \mathbb{R},\\
 z_{k-1}=\left(1-\beta\left(
 \frac{1}{\lambda_k}-\frac{1}{\lambda_{k-1}}
 \right)\right)x_k+\left(\alpha_k-\frac{\beta}{\lambda_{k-1}}\right)(x_k-x_{k-1})\\~~~~~~~~~+\beta\left(
 \frac{1}{\lambda_{k}}J_{\lambda_kT}(x_k)-\frac{1}{\lambda_{k-1}}J_{\lambda_{k-1}T}(x_{k-1})
 \right),\\
x_{k+1}= \frac{\lambda_{k+1}}{\lambda_{k+1}+s}z_{k-1}+\frac{s}{\lambda_{k+1}+s}J_{(\lambda_{k+1}+s)T}(z_{k-1}).
 \end{cases}
 \end{equation}
 This algorithm generates a sequence that satisfies
 \[
 \|T_{\lambda_k}(x_k)\|=o(1/k^2).
 \]
 Recently, Maing\'{e} \cite{mainge2021accelerated} introduced the following Corrected Relaxed Inertial Proximal Algorithm (CRIPA):
 \begin{equation}\label{CRIPA}\tag{CRIPA}
 	\begin{cases}
 	\kappa_k=\kappa_{k-1}\left(
 	1+\frac{a}{bk+c}\right),~\theta_k=1-\frac{a_1}{bk+c},~\gamma_k=1-\frac{a_2}{bk+c},\\
 	z_k=x_k+\theta_k(x_k-x_{k-1})+\gamma_k(z_{k-1}-x_k),\\
 	x_{k+1}=\frac{1}{1+\kappa_k}z_k+\frac{\kappa_k}{1+\kappa_k}J_{\lambda(1+\kappa_k)T}(z_k).
 	\end{cases}
 \end{equation}
 Under  appropriate conditions on the parameters $\{a,~b,~c,~a_1,~a_2\}$, Maing\'{e} proved that  $\kappa_k\geq k^p$ and
 \[
 	\|T_{\lambda}(x_k)\|=o(1/k^{p+1}),~\mathtt{Gap}(x_k)=o(1/k^{p+1}),~p\geq0.
 \]
Based on the second-order dynamical system, Ioan Bo\c{t} et al. \cite{bot2022fast} also  proposed an inertial PPA-type algorithm with the same convergence rate as that of \ref{CRIPA}.

We provide a summary of the comparison between the above PPA-type algorithms for solving problem \eqref{B} in the first six rows of Table \ref{tab3}.

 \begin{table}
  \setlength\tabcolsep{2pt}
  	\caption{PPA-type algorithms  for solving problem \eqref{B}}
  	\begin{center}
  		\begin{tabular}{cccccc}\hline\label{tab3}
  			&
  			\makecell{Resolvent\\index of $T$}&	
  			\makecell{Rate of\\$\|T_{\lambda}(\cdot)\|$}&	\makecell{Rate of\\$\mathtt{Gap}(\cdot)$}\\		\hline
  			PPA \eqref{PPA}&{$O(a_{k+1})$}&$O\left(1/\sqrt{\sum_{i=0}^{k}a_{i+1}^2}\right)$		 &	-\\ \hline
  				Accelerated PPA   \cite{kim2021accelerated}&$O(1)$&$O(1/k)$&-\\\hline
  		{RIPA} \cite{2017Convergence}&$O(k^2)$&$o(1/k)$	&-	\\		\hline
  	PRINA  	\cite{attouch2020newton}&$O(k^2)$&$o(1/k^2)$&-\\\hline
  	CRIPA \cite{mainge2021accelerated}&$O(k^p)$&$o(1/k^{p+1})$&\makecell{$o(1/k^{p+1})$ (non-ergodic)}\\\hline
	\multirow{2}{*}{  	\makecell{{PPA \eqref{B1}}\\{\bf (this paper)}}  }&  	\multirow{2}{*}{ $O(a_{k+1})$ }  &  	\multirow{2}{*}{ $O\left(1/{\sqrt{\sum_{i=0}^{k}a_{i+1}^2}}\right)$} &   \makecell{$O\left({1}/{\sum_{i=0}^{k}}a_{i+1} \right)$ (ergodic)} \\
	\cline{4-4} &&&\makecell{$O\left(1/{\sqrt{\sum_{i=0}^{k}a_{i+1}^2}}\right)$ (non-ergodic)} \\\hline
  			\makecell{{Contracting 	PPA}\\{\bf (this paper)}}&$O(a_{k+1})$&$O\left({1}/{\sum_{i=0}^{k}}a_{i+1} \right)$	&\makecell{$O\left({1}/{\sum_{i=0}^{k}}a_{i+1} \right)$ (non-ergodic)}\\		\hline
  		\end{tabular}
  	\end{center} 	
  \end{table}

  \subsection{Contributions}
Below are the main contributions of our paper on solving problem \eqref{B} using accelerated PPA-type algorithms.
 \begin{itemize}
\item We utilize the classical PPA to solve the equivalent problem \eqref{BB} instead of the original problem \eqref{B}. With an $O(a_{k+1})$ ($a_{k+1}\geq \varepsilon>0$) resolvent index of $T$, we can establish a convergence rate of
$O\left(1/{\sqrt{\sum_{i=0}^{k}a_{i+1}^2}}\right)$ for both the $\|T_{\lambda}(\cdot)\|$ and $\mathtt{Gap}(\cdot)$ metrics in the non-ergodic sense. Additionally, we can achieve a convergence rate of  $O(1/\sum_{i=0}^{k}a_{i+1})$ for the metric
  $\mathtt{Gap}(\cdot)$ in the  ergodic sense. See Theorem \ref{thm2.1} in Section 2.

  \item Motivated by \eqref{CPM} proposed by Doikov and Nesterov for solving convex optimization problem \eqref{f}, we propose an accelerated PPA method called the {\bf Contracting PPA} in this paper to solve problem \eqref{B}.  By utilizing a resolvent index of $T$ bounded by $O(a_{k+1})$ ($a_{k+1}\geq \varepsilon>0$), we establish a convergence rate of $O(1/\sum_{i=0}^{k}a_{i+1})$ for both $\|T_{\lambda}(\cdot)\|$ and
  $\mathtt {Gap}(\cdot)$, considering the non-ergodic sense. Please refer to  Theorem \ref {thm3.3} in Section 3. Furthermore, in Theorem \ref{EQHAL}, we demonstrate the equivalence  between the Halpern iteration \cite{lieder2021convergence} and
 our Contracting PPA under certain parameter settings.

  \item
  It should be noted that the above newly-proposed Contracting PPA does not come with a convergence guarantee.
      In order to address this limitation, we introduce in Section 4 a new version of the Contracting PPA called the {\bf Sublinear Contracting PPA}, which offers guaranteed convergence. We establish the sublinear convergence of both  $\|T_{\lambda}(\cdot)\|$ and  $\mathtt {Gap}(\cdot)$,  considering the non-ergodic sense.

  \item In Section 5, we further introduce the {\bf Linear Contracting PPA}, which comes with a convergence guarantee. We demonstrate that this algorithm achieves the linear convergence in both $\|T_{\lambda}(\cdot)\|$ and
  $\mathtt {Gap}(\cdot)$, taking into account  the non-ergodic sense.

 \end{itemize}

\section{Novel PPA iteration}
Instead of using PPA \eqref{PPA} to solve problem \eqref{B}, we consider  using PPA to solve the equivalent problem \eqref{BB} and establish the convergence rate.
More generally, we allow for inaccurate solutions in each iteration. Let $x_k-g_k$ be an approximate solution of $x_k$, where $g_k$ is the error. Then we have
\begin{eqnarray*}
x_{k+1}&=&
J_{a_{k+1}T_{\lambda}}(x_k-g_k)\\
&=&\frac{\lambda}{\lambda+a_{k+1}}(x_k-g_k)+\frac{a_{k+1}}{\lambda+a_{k+1}}J_{(\lambda+a_{k+1})T}(x_k-g_k),
\end{eqnarray*}	
where the last equality holds since $T_{\lambda}$ \eqref{BY} can be calculated as
 \begin{equation}\label{B41}
 J_{\mu T_{\lambda}}=\frac{\lambda}{\lambda+\mu}I+\frac{\mu}{\lambda+\mu}J_{(\lambda+\mu)T}.
 \end{equation}
Reorganizing the above iteration of $x_{k+1}$, we can summarize the procedure as follows.

\begin{framed}
	\noindent{\bf PPA for solving problem \eqref{BB}.}
	\noindent	
{Given $\lambda>0$, $x_0\in\mathcal{H}$, $a_{k+1}\geq \varepsilon>0$, and $\delta_k\ge0$ satisfying $\sum_{k\geq0}\delta_k<+\infty$,  find $x_{k+1}$ such that
	\begin{equation}\label{B1}
		0=a_{k+1}T_{\lambda}(x_{k+1})+x_{k+1}-x_k+g_k,
	\end{equation}
for $k=0,1,\cdots$, where $g_k\in\mathcal{H}$  satisfies  that $\|g_k\|\leq\delta_k$.}
\end{framed}
 Now, let us analyze its convergence rate.
\begin{theorem}\label{thm2.1}
	Suppose $x_*\in T^{-1}(0)$. The iterative sequence $\{x_k\}$ generated by
	 the PPA \eqref{B1} satisfies that
	 \begin{itemize}
	 	\item[\textnormal{({1})}] $\{x_k\}$ is bounded.
	 		\item[\textnormal{({2})}]
	 	$\sum_{i=0}^{+\infty} \|x_{i+1}-x_i\|^2<+\infty$.
	 		\item[\textnormal{({3})}]  $\|T_{\lambda}(x_{k+1})\|=o(1/a_{k+1})$ and $\mathtt {Gap}({x}_{k+1})=o(1/a_{k+1})$.
	 			\item[\textnormal{({4})}]
	 		$\mathtt {Gap}(\widetilde{x}_{k+1})\leq O(1/\sum_{i=0}^{k}a_{i+1})$, where $\widetilde{x}_{k+1}=\frac{1}{\sum_{i=0}^{k}a_{i+1}}\sum_{i=0}^{k}a_{i+1}x_{i+1}$.
	 			\item[\textnormal{({5})}] $\{x_k\}$
 converges weakly to a solution of \eqref{B}.

 	\item[\textnormal{({6})}]  In the case when $\delta_k\equiv0$, it holds that $\|T_{\lambda}(x_{k+1})\|\leq O\left(1/{\sqrt{\sum_{i=0}^{k}a_{i+1}^2}}\right)$ and $\mathtt {Gap}({x}_{k+1})\leq O\left(1/{\sqrt{\sum_{i=0}^{k}a_{i+1}^2}}\right)$.
	 \end{itemize}
\end{theorem}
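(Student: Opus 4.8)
The plan is to run the standard cocoercive/quasi-Fej\'er machinery, exploiting that $T_{\lambda}(x_*)=0$ whenever $x_*\in T^{-1}(0)$ and that $T_{\lambda}$ is $\lambda$-cocoercive. Rewriting \eqref{B1} as $x_k-x_{k+1}=a_{k+1}T_{\lambda}(x_{k+1})+g_k$ and expanding $\|x_k-x_*\|^2$ around $x_{k+1}-x_*$, I would obtain the energy identity
\[
\|x_{k+1}-x_*\|^2=\|x_k-x_*\|^2-\|x_{k+1}-x_k\|^2-2a_{k+1}\langle x_{k+1}-x_*,T_{\lambda}(x_{k+1})\rangle-2\langle x_{k+1}-x_*,g_k\rangle.
\]
Cocoercivity bounds the third term below by $2a_{k+1}\lambda\|T_{\lambda}(x_{k+1})\|^2\ge0$, while $|2\langle x_{k+1}-x_*,g_k\rangle|\le 2\|x_{k+1}-x_*\|\delta_k$ captures the inexactness. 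Viewing the resulting inequality $\|x_{k+1}-x_*\|^2\le\|x_k-x_*\|^2+2\|x_{k+1}-x_*\|\delta_k$ as a quadratic in $\|x_{k+1}-x_*\|$ yields $\|x_{k+1}-x_*\|\le\|x_k-x_*\|+2\delta_k$, hence boundedness of $\{x_k\}$ via $\sum_k\delta_k<\infty$, establishing (1). With $\{x_k\}$ bounded the error term becomes summable, so the sequence is quasi-Fej\'er; summing the descent inequality telescopes $\sum_i\|x_{i+1}-x_i\|^2$ and $\sum_i a_{i+1}\|T_{\lambda}(x_{i+1})\|^2$ to finite values, giving (2), and the standard quasi-Fej\'er lemma shows $\lim_k\|x_k-x_*\|$ exists for every $x_*\in T^{-1}(0)$.

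For (3), summability from (2) forces $\|x_{k+1}-x_k\|\to0$; since $\|x_{k+1}-x_k\|=\|a_{k+1}T_{\lambda}(x_{k+1})+g_k\|$ and $\delta_k\to0$, I get $a_{k+1}\|T_{\lambda}(x_{k+1})\|\to0$, i.e. $\|T_{\lambda}(x_{k+1})\|=o(1/a_{k+1})$. The gap bound follows from monotonicity: for any $y$, $\langle x_{k+1}-y,T_{\lambda}(y)\rangle\le\langle x_{k+1}-y,T_{\lambda}(x_{k+1})\rangle\le\|x_{k+1}-y\|\,\|T_{\lambda}(x_{k+1})\|$, and boundedness of $\{x_k\}$ makes $\|x_{k+1}-y\|$ uniformly bounded on $\mathbb{B}(x_0,D)$, so $\mathtt{Gap}(x_{k+1})=o(1/a_{k+1})$ as well.

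For the ergodic estimate (4), I would again use monotonicity to replace $T_{\lambda}(y)$ by $T_{\lambda}(x_{i+1})$, substitute $a_{i+1}T_{\lambda}(x_{i+1})=x_i-x_{i+1}-g_i$, and apply the three-point identity $2\langle x_{i+1}-y,x_i-x_{i+1}\rangle=\|x_i-y\|^2-\|x_{i+1}-y\|^2-\|x_{i+1}-x_i\|^2$. Summing telescopes the first two terms to at most $\|x_0-y\|^2\le D^2$ on $\mathbb{B}(x_0,D)$, the $\|x_{i+1}-x_i\|^2$ terms are discarded, and the $g_i$-terms are controlled by $\sum_i\|x_{i+1}-y\|\delta_i<+\infty$; dividing by $\sum_{i=0}^k a_{i+1}$ gives the $O(1/\sum_i a_{i+1})$ ergodic rate, uniformly in $y$. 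For (5), $\|T_{\lambda}(x_k)\|\to0$ (as $a_{k+1}\ge\varepsilon$) together with the maximal monotonicity of $T_{\lambda}$ gives demiclosedness: any weak cluster point $\bar x$ of $\{x_k\}$ satisfies $T_{\lambda}(\bar x)=0$, so $\bar x\in T^{-1}(0)$; combined with the existence of $\lim_k\|x_k-x_*\|$, Opial's lemma forces weak convergence to a single solution. Finally, for the exact case (6), monotonicity of $T_{\lambda}$ applied to $x_{k+1}-x_k=-a_{k+1}T_{\lambda}(x_{k+1})$ shows $\|T_{\lambda}(x_{k+1})\|\le\|T_{\lambda}(x_k)\|$, so the residual is nonincreasing; summing the now error-free descent inequality gives $\sum_{i=0}^k a_{i+1}^2\|T_{\lambda}(x_{i+1})\|^2\le\|x_0-x_*\|^2$, and pulling the smallest (final) residual out of the weighted sum yields $\|T_{\lambda}(x_{k+1})\|^2\sum_{i=0}^k a_{i+1}^2\le\|x_0-x_*\|^2$, i.e. the $O(1/\sqrt{\sum_i a_{i+1}^2})$ rate, with the matching gap bound following as in (3).

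I expect the main obstacle to be the careful bookkeeping of the inexactness $g_k$: closing the boundedness argument in (1) requires the quadratic-in-$\|x_{k+1}-x_*\|$ trick (or an equivalent quasi-Fej\'er lemma for summable perturbations) rather than a naive Cauchy--Schwarz split, and one must verify that each later estimate degrades only by a summable amount. The second delicate point is the nonincreasing monotonicity of $\|T_{\lambda}(x_{k+1})\|$ underpinning (6), which relies on the iteration being exact; with errors present this monotonicity can fail, which is precisely why the sharp $O(1/\sqrt{\sum_i a_{i+1}^2})$ rate is claimed only for $\delta_k\equiv0$.
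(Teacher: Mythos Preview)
Your proposal is correct and follows essentially the same route as the paper: the same three-point/energy identity \eqref{B37}, the same telescoping for (2) and (4), the same $\|x_{k+1}-x_k\|\to0$ argument for (3), Opial plus demiclosedness of $T_{\lambda}$ for (5), and the same monotone-residual argument for (6). The only cosmetic difference is in (1): you complete the square on $\|x_{k+1}-x_*\|$ to get $\|x_{k+1}-x_*\|\le\|x_k-x_*\|+2\delta_k$ directly, whereas the paper sums first and then invokes the standard quasi-Fej\'er lemma $a_k^2\le c^2+\sum_{i\le k}a_ib_i\Rightarrow a_k\le c+\sum_i b_i$; the two devices are equivalent here.
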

\begin{proof}
	Based on the iteration \eqref{B1}, we can derive the following result:
	\begin{eqnarray*}
	0&=&	\langle x-x_{k+1}, a_{k+1} T_{\lambda}(x_{k+1})+x_{k+1}-x_k +g_k  \rangle\\&=& a_{k+1}	\langle x-x_{k+1},  T_{\lambda}(x_{k+1})  \rangle+	\langle x-x_{k+1},x_{k+1}-x_k  \rangle+	\langle x-x_{k+1},  g_k  \rangle\\&=&a_{k+1}	\langle x-x_{k+1},  T_{\lambda}(x_{k+1})  \rangle+	\langle x-x_{k+1},  g_k  \rangle\\&& -\frac{1}{2}\left(
	\|x_{k+1}-x\|^2-\|x_k-x\|^2+\|x_{k+1}-x_k\|^2
	\right)	.
	\end{eqnarray*}
	By rearranging terms in the above equality and applying the Cauchy-Schwarz inequality, we obtain:
	\begin{equation}\label{B37}
	\begin{aligned}
		&a_{k+1}	\langle x_{k+1}-x,  T_{\lambda}(x_{k+1})\rangle+\frac{1}{2}\|x_{k+1}-x\|^2\\\leq&
\frac{1}{2}\|x_{k}-x\|^2-\frac{1}{2}\|x_{k+1}-x_k\|^2+\|x-x_{k+1}\|\|  g_k  \|
\\\leq&
\frac{1}{2}\|x_{k}-x\|^2-\frac{1}{2}\|x_{k+1}-x_k\|^2+\delta_k\|x-x_{k+1}\|.
	\end{aligned}
	\end{equation}
	
\noindent Proof of \textnormal{({1})}.
By adding the above inequality from	$k=0$ to $k=t$, we have
	\begin{equation}\label{B36}
	\begin{aligned}
&\sum_{k=0}^{t}	a_{k+1}	\langle x_{k+1}-x,  T_{\lambda}(x_{k+1})\rangle+\frac{1}{2}\|x_{t+1}-x\|^2\\\leq&
\frac{1}{2}\|x_{0}-x\|^2-\frac{1}{2}\sum_{k=0}^{t}\|x_{k+1}-x_k\|^2+\sum_{k=0}^{t}\delta_k\|x-x_{k+1}\|.
	\end{aligned}	
	\end{equation}
	 Setting $x=x_*$ in the above inequality and disregarding the nonnegative terms $\sum_{k=0}^{t}	a_{k+1}	\langle x_{k+1}-x_*,  T_{\lambda}(x_{k+1})\rangle$ and $\sum_{k=0}^{t}\frac{1}{2}\|x_{k+1}-x_k\|^2$, we obtain that $\{x_k\}$ is bounded. This is achieved by utilizing Lemma \ref{L1} (see Appendix A), where we assign {$a_k=\frac{1}{\sqrt{2}}\|x_{k+1}-x_*\|$, $b_k=\sqrt{2}\delta_k$} and $c=\frac{1}{\sqrt{2}}\|x_{0}-x_*\|$.
	
	 \noindent Proof of \textnormal{({2})}.
The summability of  $ \|x_{k+1}-x_k\|^2$  can be inferred from the inequality \eqref{B36} when considering $x=x_*$, and the boundedness of the sequence $\{x_k\}$.

	  \noindent  Proof of \textnormal{({3})}. According to iteration \eqref{B1}, it holds that
	  $$
	  \|T_{\lambda}(x_{k+1})\|=\frac{1}{a_{k+1}}\|x_{k+1}-x_k+g_k\|\leq\frac{1}{a_{k+1}}(\|x_{k+1}-x_k\|+\|g_k\|).
	  $$
	  Noting that $\|x_{k+1}-x_k\|\rightarrow 0$ and $\|g_k\|\rightarrow 0$ as $k\rightarrow+\infty$, we obtain the conclusion.
	
	  \noindent According to the monotonicity of  $T_{\lambda}$, {for any $x\in\mathbb{B}(x_0,D),~D>0$}, we have
	  $$
	  \begin{aligned}
	  &	\langle x_{k+1}-x,  T_{\lambda}(x_{}) \rangle\leq	\langle x_{k+1}-x,  T_{\lambda}(x_{k+1}) \rangle\\=&\frac{1}{a_{k+1}}\langle x-x_{k+1},    x_{k+1}-x_k +g_k \rangle
	  \leq\frac{1}{a_{k+1}}\|x-x_{k+1}\|  ( \| x_{k+1}-x_k \|+\|g_k\|),
	  \end{aligned}
	  $$
where the equality follows from equation \eqref{B1}. 	  By applying the definition of $\mathtt{Gap}({x}_{k+1})$, as well as considering that $\|x_{k+1}-x_k\|\rightarrow 0$ and $\|g_k\|\rightarrow 0$ as $k\rightarrow+\infty$, we can arrive at the desired conclusion.
	
	 \noindent Proof of \textnormal{({4})}. {For any $x\in\mathbb{B}(x_0,D),~D>0$}, we can verify that
	 \begin{eqnarray*}
	 	\langle \widetilde x_{k+1}-x,  T_{\lambda}(x)\rangle&=&\frac{1}{ \sum_{i=0}^{k}	a_{i+1}}
	 \sum_{i=0}^{k}	a_{i+1}	\langle x_{i+1}-x,  T_{\lambda}(x_{})\rangle\\&	\leq& \frac{1}{ \sum_{i=0}^{k}	a_{i+1}} \sum_{i=0}^{k}	a_{i+1}	\langle x_{i+1}-x,  T_{\lambda}(x_{i+1})\rangle\\&\overset{\eqref{B36}}{\leq}&\frac{1}{ \sum_{i=0}^{k}	a_{i+1}}\Big(\frac{1}{2}\|x_{0}-x\|^2+\sum_{i=0}^{k}\delta_i\|x-x_{i+1}\|\Big).
	\end{eqnarray*}
	 According to the definition of $\mathtt{Gap}(\widetilde{x}_{k+1})$, we obtain the conclusion.
	
 \noindent Proof of \textnormal{({5})} follows from Lemma \ref{L4} (see Appendix A). It suffices to establish the validity of the two conditions stated in Lemma \ref{L4}.

First, we will prove the first condition in Lemma \ref{L4}.
By substituting $x=x_*$ into \eqref{B37} and disregarding the nonnegative terms $\langle x_{k+1}-x_*,  T_{\lambda}(x_{k+1})\rangle$ and $\frac{1}{2}\|x_{k+1}-x_k\|^2$, we can assert that
since $\{x_k\}$ is bounded, there exists a constant $C>0$ such that
  $$
  \begin{aligned}
  &\frac{1}{2}\|x_{k+1}-x_*\|^2\leq
  \frac{1}{2}\|x_{k}-x_*\|^2+\delta_kC.
  \end{aligned}
  $$
  Then $\frac{1}{2}\|x_{k+1}-x_*\|^2$ converges by invoking Lemma \ref{L3} with $a_k=\frac{1}{2}\|x_{k}-x_*\|^2$ and $d_k=\delta_kC$.

Then, we proceed to prove the second condition in Lemma \ref{L4}. Given the provided condition  ($a_{k+1}\geq \varepsilon>0$) and  $\|T_{\lambda}(x_{k+1})\|=o(1/a_{k+1})$, we can conclude that  $\lim\limits_{k\rightarrow+\infty}\|T_{\lambda}(x_k)\|=0$. Consequently, every weak cluster point of  $x_k$ belongs to the solution of \eqref{B} by utilizing Lemma \ref{L6}.

  \noindent  Proof of \textnormal{({6})}. Based on  the monotonicity of  $T_{\lambda}$ and iteration \eqref{B1}, if  $\delta_k\equiv0$,  it holds
  \begin{equation}\label{B3}
  \begin{aligned}
  &	0\leq\langle x_{k+1}-x_k, \frac{x_k-x_{k+1}}{a_{k+1}}-\frac{x_{k-1}-x_{k}}{a_{k}}\rangle\\\Longrightarrow&\frac{1}{a_{k+1}}\|x_{k+1}-x_k\|^2\leq\langle  x_{k+1}-x_k, \frac{x_{k}-x_{k-1}}{a_{k}}  \rangle\\\Longrightarrow&\frac{1}{a_{k+1}}\|x_{k+1}-x_k\|\leq \frac{1}{a_{k}}\|x_{k-1}-x_{k}\| .
  \end{aligned}
  \end{equation}
The conclusion follows from the boundedness of $\{x_k\}$ and the following inequalities:
  \begin{eqnarray*}
  &&\begin{aligned}
  +\infty>	&\sum_{i=0}^{k} \|x_{i+1}-x_i\|^2=\sum_{i=0}^{k}a_{i+1}^2 \frac{1}{a_{i+1}^2}\|x_{i+1}-x_i\|^2\\\overset{\eqref{B3}}{\geq}&\frac{1}{a_{k+1}^2}\|x_{k+1}-x_k\|^2\sum_{i=0}^{k}a_{i+1}^2=\|T_{\lambda}(x_{k+1})\|^2\sum_{i=0}^{k}a_{i+1}^2,
  \end{aligned}\\&&
 \begin{aligned}
  \langle x_{k+1}-x,  T_{\lambda}(x_{}) \rangle\leq	\langle x_{k+1}-x,  T_{\lambda}(x_{k+1}) \rangle\leq \| x_{k+1}-x\| \cdot \|  T_{\lambda}(x_{k+1})\|.
  \end{aligned}
  \end{eqnarray*}
\end{proof}

\section{{\bf Contracting PPA: acceleration}}
In this section,  we propose the  Contracting PPA as an acceleration technique for the new PPA \eqref{B1}.
This approach is inspired by \ref{CPM}, which is utilized to tackle problem \eqref{f}.
\begin{framed}
	\noindent{\bf Contracting PPA.}
	
	\noindent{
Given $a_{k+1}\geq \varepsilon>0$, $x_{-1}=x_0\in\mathcal{H}$, $A_{-1}=0$, for $k=0,1,\cdots$, set
		\begin{eqnarray}
		A_{k}&=&a_{k}+A_{k-1}, \label{B8v:0}\\
			v_k&=&\frac{A_{k}x_{k}-A_{k-1}x_{k-1}}{a_{k}}. \label{B8v}
		\end{eqnarray}
For given $\lambda>0$ and $\delta_k\ge0$ satisfying $\sum_{k\geq0}\delta_k<+\infty$,
find $x_{k+1}$ such that
	\begin{equation}\label{B8}
		0=A_{k+1}T_{\lambda}(x_{k+1})-A_{k}T_{\lambda}(x_{k})+v_{k+1}-v_k+g_k,
	\end{equation}
 where $g_k\in \mathcal{H}$  satisfies  that $\|g_k\|\leq\delta_k$. }
\end{framed}


The following lemma offers a new perspective on the  Contracting PPA.

\begin{lemma}
 The Contracting PPA is equivalent to
\begin{equation}\label{B43}
~
\left\{
\begin{array}{lcl}
y_k=x_k+\frac{a_{k+1}A_{k-1}}{a_{k}A_{k+1}}(x_{k}-x_{k-1}),\\
	z_k=y_k+\frac{a_{k+1}A_k}{A_{k+1}a_k}(z_{k-1}-x_k-\frac{a_k}{A_k}g_{k-1}),\\
	x_{k+1}=\Big(\frac{\lambda}{\lambda+a_{k+1}}I+\frac{a_{k+1}}{\lambda+a_{k+1}}J_{(\lambda+a_{k+1})T}\Big)\big(z_k-\frac{a_{k+1}}{A_{k+1}}g_k\big).
\end{array}
\right.
\end{equation}
Specifically,  the Contracting PPA with  $g_k\equiv0$ (the exact version) is equivalent to:
\begin{equation}\label{B42}
	x_{k+1}=\Big(\frac{\lambda}{\lambda+a_{k+1}}I+\frac{a_{k+1}}{\lambda+a_{k+1}}J_{(\lambda+a_{k+1})T}\Big)\Big(\frac{a_{k+1}}{A_{k+1}}s_0+\frac{A_k}{A_{k+1}}x_k\Big),
\end{equation}where $s_0=A_{0}T_{\lambda}(x_{0})+v_0.$
\end{lemma}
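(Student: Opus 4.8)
The plan is to reduce the implicit update \eqref{B8} to a single proximal step and then match coefficients against \eqref{B43}. The key device is the auxiliary sequence $s_k := A_k T_{\lambda}(x_k) + v_k$, which generalizes the quantity $s_0$ appearing in the statement. With this notation the defining relation \eqref{B8} reads exactly $s_{k+1} = s_k - g_k$, so that $s_k$ changes only through the accumulated errors; in particular $s_{k+1} = s_0 - \sum_{i=0}^{k} g_i$, and in the exact case $g_k\equiv 0$ one has $s_k \equiv s_0$.

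Next I would eliminate $v_{k+1}$. Substituting $v_{k+1} = (A_{k+1}x_{k+1} - A_k x_k)/a_{k+1}$ into $s_{k+1} = s_k - g_k$ and multiplying by $a_{k+1}/A_{k+1}$ turns \eqref{B8} into
\[
a_{k+1} T_{\lambda}(x_{k+1}) + x_{k+1} = \tfrac{A_k}{A_{k+1}} x_k + \tfrac{a_{k+1}}{A_{k+1}} s_k - \tfrac{a_{k+1}}{A_{k+1}} g_k,
\]
that is, $x_{k+1} = J_{a_{k+1}T_{\lambda}}\bigl(w_k - \tfrac{a_{k+1}}{A_{k+1}} g_k\bigr)$ with $w_k := \tfrac{A_k}{A_{k+1}} x_k + \tfrac{a_{k+1}}{A_{k+1}} s_k$. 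Applying the resolvent identity \eqref{B41} with $\mu = a_{k+1}$ immediately yields the third line of \eqref{B43}, provided we establish $z_k = w_k$. In the exact case $s_k \equiv s_0$ forces $w_k = \tfrac{a_{k+1}}{A_{k+1}} s_0 + \tfrac{A_k}{A_{k+1}} x_k$, which is precisely the argument of the resolvent in \eqref{B42}; this settles the exact version at once.

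It remains to prove $z_k = w_k$ for the inexact scheme, and this is the computational heart of the lemma. I would argue by induction: assuming $z_{k-1} = w_{k-1} = \tfrac{A_{k-1}}{A_k} x_{k-1} + \tfrac{a_k}{A_k} s_{k-1}$, I solve this for $s_{k-1}$, insert $s_k = s_{k-1} - g_{k-1}$ into the definition of $w_k$, and expand. After collecting terms the coefficient of $x_k$ becomes $1 + \tfrac{a_{k+1}A_{k-1}}{a_k A_{k+1}} - \tfrac{a_{k+1}A_k}{a_k A_{k+1}}$, which simplifies to $A_k/A_{k+1}$ using $A_{k-1}-A_k = -a_k$ and $A_{k+1}-a_{k+1}=A_k$; the remaining contributions of $x_{k-1}$, $z_{k-1}$, and $g_{k-1}$ reproduce exactly the two-line update defining $y_k$ and $z_k$ in \eqref{B43}. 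The main obstacle is purely bookkeeping: verifying that this collection of terms lines up with the stated recursion, and fixing the initialization. With $A_{-1}=0$ and $x_{-1}=x_0$ the inertial term vanishes so that $y_0 = x_0$, and the base case reduces to the convention $z_{-1}=s_0$, $g_{-1}=0$, under which $z_0 = w_0 = x_0 + \tfrac{a_0 a_1}{A_1}T_{\lambda}(x_0)$. Once these identities are checked the equivalence follows.
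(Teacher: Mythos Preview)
Your proposal is correct and follows essentially the same computation as the paper. The paper introduces $z_k':=y_k+\frac{a_{k+1}A_k}{A_{k+1}}T_\lambda(x_k)$ directly---this is exactly your $w_k$, since $\frac{A_k}{A_{k+1}}x_k+\frac{a_{k+1}}{A_{k+1}}v_k=y_k$---and derives the recursion by substituting the previous step's resolvent identity for $T_\lambda(x_k)$, whereas you organize the same algebra around the telescoping relation $s_{k+1}=s_k-g_k$; the advantage of your packaging is that the exact case \eqref{B42} falls out immediately rather than requiring a separate paragraph.
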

\begin{proof}
	According to the iteration of $v_k$ in the Contracting PPA, we have
	\begin{equation}\nonumber
	\begin{aligned}
		v_{k+1}-v_k=&\frac{A_{k+1}}{a_{k+1}}(x_{k+1}-x_k)-\frac{A_{k-1}}{a_{k}}(x_{k}-x_{k-1})\\=&
	\frac{A_{k+1}}{a_{k+1}}\Big(
	x_{k+1}-\big(\underbrace{x_k+\frac{a_{k+1}A_{k-1}}{a_{k}A_{k+1}}(x_{k}-x_{k-1})}_{y_k}\big)
	\Big).
		\end{aligned}
	\end{equation}
For the sake of convenience, let us define $z_k':=y_k+\frac{a_{k+1}A_k}{A_{k+1}}T_{\lambda}(x_{k})$. Then, it follows that \eqref{B8} in the Contracting PPA is equivalent to
	\begin{eqnarray}
			 0&=&A_{k+1}T_{\lambda}(x_{k+1})-A_{k}T_{\lambda}(x_{k})+	\frac{A_{k+1}}{a_{k+1}}(x_{k+1}-y_k)+g_k\nonumber\\\Longleftrightarrow~~~~~0&=&
		T_{\lambda}(x_{k+1})+	\frac{1}{a_{k+1}}(x_{k+1}-z_k'+\frac{a_{k+1}}{A_{k+1}}g_k)
		\label{B32}	\\\Longleftrightarrow x_{k+1}&=&(I+a_{k+1}T_{\lambda})^{-1}\big(z_k'-\frac{a_{k+1}}{A_{k+1}}g_k\big)=J_{a_{k+1}T_{\lambda}}\big(z'_k-\frac{a_{k+1}}{A_{k+1}}g_k\big)\nonumber\\&\overset{\eqref{B41}}{=}&\Big(\frac{\lambda}{\lambda+a_{k+1}}I+\frac{a_{k+1}}{\lambda+a_{k+1}}J_{(\lambda+a_{k+1})T}\Big)\big(z_k'-\frac{a_{k+1}}{A_{k+1}}g_k\big).\label{B33}
	\end{eqnarray}
Additionally, we can remove $T_{\lambda}(x_k)$ in $z_k'=y_k+\frac{a_{k+1}A_k}{A_{k+1}}T_{\lambda}(x_{k})$ by applying \eqref{B32}, which leads us to
	$$
	z_k'=y_k+\frac{a_{k+1}A_k}{A_{k+1}a_k}(z_{k-1}'-x_k-\frac{a_k}{A_k}g_{k-1}).
	$$
 Finally, by setting $z_k=z_k'$ in \eqref{B33}, we can conclude that the Contracting PPA is equivalent to the expression given by \eqref{B43}.
	
	For the exact Contracting PPA with  $g_k\equiv0$, we have
	$$
	A_{k+1}T_{\lambda}(x_{k+1})+v_{k+1}=A_{k}T_{\lambda}(x_{k})+v_k=A_{0}T_{\lambda}(x_{0})+v_0=s_0,
	$$which can be simplified to:
	$$
	\begin{aligned}
	x_{k+1}&=(I+a_{k+1}T_{\lambda})^{-1}\big(\frac{a_{k+1}}{A_{k+1}}s_0+\frac{A_k}{A_{k+1}}x_k\big)=J_{a_{k+1}T_{\lambda}}\big(\frac{a_{k+1}}{A_{k+1}}s_0+\frac{A_k}{A_{k+1}}x_k\big).
	\end{aligned}
	$$
Then, using \eqref{B41}, we can derive the remaining conclusion.
\end{proof}

Let us now consider the following {potential function}
\begin{equation}\label{B16}
\varTheta_{k+1}(x):=A_{k+1}\langle x_{k+1}-x, T_{\lambda}(x_{k+1})\rangle+\frac{1}{2}\|v_{k+1}-x\|^2.
\end{equation}
It plays a crucial role in the analysis of the convergence rate of the Contracting PPA.

\begin{lemma}\label{LC}
	Suppose $x_*\in T^{-1}(0)$. The iterative sequence $\{x_k\}$ generated by the Contracting PPA satisfies that
	\begin{equation}\label{B17}
	\begin{aligned}
	\varTheta_{k+1}(x)=	\varTheta_{k}(x)&-
	\left(
\frac{A_kA_{k+1}}{a_{k+1}}\langle x_{k+1}-x_k, T_{\lambda}(x_{k+1})-T_{\lambda}(x_{k}) \rangle\right. \\&\left.
	+\frac{1}{2}\|v_{k+1}-v_k\|^2+\langle
	x-v_{k+1}, g_k
	\rangle
	\right).
	\end{aligned}
	\end{equation}
	In addition,
	\begin{itemize}
			\item[\textnormal{({1})}] $\{v_k\}$ is bounded.
	\item[\textnormal{({2})}]	$\sum_{k\geq0}\vert\langle
			v_{k+1}-x_*, g_k
			\rangle\vert<+\infty.$
	\item[\textnormal{({3})}]	$\sum_{k\geq0}	\frac{A_kA_{k+1}}{a_{k+1}}\| T_{\lambda}(x_{k+1})-T_{\lambda}(x_{k})\|^2<+\infty.$
	\item[\textnormal{({4})}]	$\sum_{k\geq0} \|v_{k+1}-v_k\|^2<+\infty$.
	\end{itemize}
\end{lemma}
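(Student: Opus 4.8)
The plan is to establish the telescoping identity \eqref{B17} by a direct expansion of $\varTheta_{k+1}(x)-\varTheta_k(x)$, and then to read off (1)--(4) by summing it with the choice $x=x_*$. First I would split the difference into the linear part $A_{k+1}\langle x_{k+1}-x,T_\lambda(x_{k+1})\rangle-A_k\langle x_k-x,T_\lambda(x_k)\rangle$ and the quadratic part $\tfrac12\|v_{k+1}-x\|^2-\tfrac12\|v_k-x\|^2$. For the quadratic part I would use the polarization identity $\tfrac12\|v_{k+1}-x\|^2-\tfrac12\|v_k-x\|^2=\langle v_{k+1}-x,\,v_{k+1}-v_k\rangle-\tfrac12\|v_{k+1}-v_k\|^2$, and then substitute $v_{k+1}-v_k=A_kT_\lambda(x_k)-A_{k+1}T_\lambda(x_{k+1})-g_k$ coming from the defining relation \eqref{B8}. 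Collecting the coefficients of $T_\lambda(x_{k+1})$ and of $T_\lambda(x_k)$ collapses everything down to $A_{k+1}\langle x_{k+1}-v_{k+1},T_\lambda(x_{k+1})\rangle+A_k\langle v_{k+1}-x_k,T_\lambda(x_k)\rangle$, plus the square term and the $g_k$ error.

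The algebraic heart of the identity is to simplify these two coefficient vectors. Using $v_{k+1}=(A_{k+1}x_{k+1}-A_kx_k)/a_{k+1}$ from \eqref{B8v} together with $A_{k+1}=a_{k+1}+A_k$ from \eqref{B8v:0}, I would verify the two elementary identities $A_{k+1}(x_{k+1}-v_{k+1})=-\tfrac{A_kA_{k+1}}{a_{k+1}}(x_{k+1}-x_k)$ and $A_k(v_{k+1}-x_k)=\tfrac{A_kA_{k+1}}{a_{k+1}}(x_{k+1}-x_k)$. Substituting these, the two inner products combine into the single term $-\tfrac{A_kA_{k+1}}{a_{k+1}}\langle x_{k+1}-x_k,\,T_\lambda(x_{k+1})-T_\lambda(x_k)\rangle$, which is precisely the first entry inside the parentheses of \eqref{B17}; the $-\tfrac12\|v_{k+1}-v_k\|^2$ term and the error term $\langle x-v_{k+1},g_k\rangle$ then fall out directly, proving \eqref{B17}.

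For the four consequences I would put $x=x_*$ and use that $x_*\in T^{-1}(0)$ forces $J_{\lambda T}(x_*)=x_*$ and hence $T_\lambda(x_*)=0$. The $\lambda$-cocoercivity of $T_\lambda$ then gives $\langle x_{k+1}-x_*,T_\lambda(x_{k+1})\rangle\ge\lambda\|T_\lambda(x_{k+1})\|^2\ge0$, so that $\varTheta_{k+1}(x_*)\ge\tfrac12\|v_{k+1}-x_*\|^2\ge0$, and likewise $\langle x_{k+1}-x_k,T_\lambda(x_{k+1})-T_\lambda(x_k)\rangle\ge\lambda\|T_\lambda(x_{k+1})-T_\lambda(x_k)\|^2\ge0$. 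Summing \eqref{B17} from $k=0$ to $k=t$ and moving the two nonnegative sums to the left-hand side yields, after Cauchy--Schwarz and $\|g_k\|\le\delta_k$, the estimate $\tfrac12\|v_{t+1}-x_*\|^2\le\varTheta_0(x_*)+\sum_{k=0}^{t}\delta_k\|v_{k+1}-x_*\|$, where $\varTheta_0(x_*)$ is a finite nonnegative constant.

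The main obstacle is that this last estimate is self-referential, since the quantity $\|v_{k+1}-x_*\|$ to be bounded also appears on the right. I would break the circularity exactly as in the proof of Theorem \ref{thm2.1}, applying the Gronwall-type Lemma \ref{L1} with $a_k=\tfrac{1}{\sqrt2}\|v_{k+1}-x_*\|$, $b_k=\sqrt2\,\delta_k$ and $c=\sqrt{\varTheta_0(x_*)}$; because $\sum_{k\ge0}\delta_k<+\infty$, this yields boundedness of $\{v_k\}$, i.e.\ (1). Once $\|v_{k+1}-x_*\|\le M$ for some $M$, the errors obey $|\langle v_{k+1}-x_*,g_k\rangle|\le M\delta_k$ with summable right-hand side, giving (2). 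Feeding (2) back into the summed identity shows the right-hand side remains bounded as $t\to+\infty$, so the two nonnegative series on the left converge: the $\sum\|v_{k+1}-v_k\|^2$ series gives (4), and bounding the cocoercivity term from below by $\lambda\|T_\lambda(x_{k+1})-T_\lambda(x_k)\|^2$ gives (3).
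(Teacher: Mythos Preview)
Your proposal is correct and follows essentially the same route as the paper. The only cosmetic difference is the starting point: the paper takes the inner product of $x-v_{k+1}$ with the iteration relation \eqref{B8} and expands each piece (its items (i)--(iii)), whereas you expand $\varTheta_{k+1}(x)-\varTheta_k(x)$ directly, apply the polarization identity, and then substitute $v_{k+1}-v_k$ from \eqref{B8}; both computations hinge on the same two identities $x_{k+1}-v_{k+1}=-\tfrac{A_k}{a_{k+1}}(x_{k+1}-x_k)$ and $v_{k+1}-x_k=\tfrac{A_{k+1}}{a_{k+1}}(x_{k+1}-x_k)$, and the deduction of (1)--(4) via cocoercivity, summation, and Lemma~\ref{L1} is identical to the paper's.
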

\begin{proof}
First, we can verify the following equalities hold.
\begin{eqnarray*}
&&\begin{aligned}
&{\rm(i)}\langle x-v_{k+1}, A_{k+1}T_{\lambda}(x_{k+1})\rangle
= \langle x-x_{k+1}-\frac{A_k}{a_{k+1}}(x_{k+1}-x_k), A_{k+1}T_{\lambda}(x_{k+1}) \rangle\\=&
A_{k+1}\langle x-x_{k+1}, T_{\lambda}(x_{k+1})\rangle-\frac{A_kA_{k+1}}{a_{k+1}}\langle
x_{k+1}-x_k,T_{\lambda}(x_{k+1})
\rangle,
\end{aligned}\\
&&
\begin{aligned}
&{\rm(ii)}\langle x-v_{k+1}, -A_{k}T_{\lambda}(x_{k})\rangle
= \langle x-x_{k}-\frac{A_{k+1}}{a_{k+1}}(x_{k+1}-x_k), -A_{k}T_{\lambda}(x_{k}) \rangle\\=&
-A_{k}\langle x-x_{k}, T_{\lambda}(x_{k})\rangle+\frac{A_kA_{k+1}}{a_{k+1}}\langle
x_{k+1}-x_k,T_{\lambda}(x_{k})
\rangle,
\end{aligned}\\
&&~~{\rm(iii)}
\langle x-v_{k+1}, v_{k+1}-v_k\rangle=-\frac{1}{2}(\|v_{k+1}-x\|^2-\|v_k-x\|^2+\|v_{k+1}-v_k\|^2).
\end{eqnarray*}
Then, by adding the above three equalities, we obtain the following condition:
\begin{small}
\[
\begin{aligned}
0=&\langle  x-v_{k+1},   A_{k+1}T_{\lambda}(x_{k+1})-A_{k}T_{\lambda}(x_{k})+v_{k+1}-v_k +g_k  \rangle\\=&\underbrace{A_{k+1}\langle x-x_{k+1}, T_{\lambda}(x_{k+1})\rangle\!-\!\frac{1}{2}\|v_{k+1}-x\|^2}_{-\Theta_{k+1}(x)}
\!-\!\underbrace{ (A_{k}\langle x-x_{k}, T_{\lambda}(x_{k})\rangle\!-\!\frac{1}{2}\|v_{k}-x\|^2 )}_{-\Theta_{k}(x)}\\&\!-\!
\left(
\frac{A_kA_{k+1}}{a_{k+1}}\langle x_{k+1}-x_k, T_{\lambda}(x_{k+1})-T_{\lambda}(x_{k}) \rangle
\!+\!\frac{1}{2}\|v_{k+1}-v_k\|^2\!+\!\langle x-
v_{k+1}, g_k
\rangle\right)
.
\end{aligned}
\]
\end{small}
By rearranging some terms, we can express it as  \eqref{B17}.

Subsequently, by setting $x=x_*$ in \eqref{B17} and utilizing the $\lambda$-cocoercive of $T_{\lambda}$, we can deduce that
\begin{small}
	\begin{equation}\label{B19}
	\begin{aligned}
&\frac{1}{2}\|v_{k+1}-x_*\|^2\leq	\varTheta_{k+1}(x_*)
\\\leq&\varTheta_{k}(x_*)-
	 (
	\lambda\frac{A_kA_{k+1}}{a_{k+1}}\| T_{\lambda}(x_{k+1})-T_{\lambda}(x_{k})\|^2
	+\frac{1}{2}\|v_{k+1}-v_k\|^2+\langle
	x_*-v_{k+1}, g_k
	\rangle )\\\leq&\varTheta_{k}(x_*)-
	 (
	\lambda\frac{A_kA_{k+1}}{a_{k+1}}\| T_{\lambda}(x_{k+1})-T_{\lambda}(x_{k})\|^2
	+\frac{1}{2}\|v_{k+1}-v_k\|^2 )+\|v_{k+1}-x_*\|\| g_k\|
	\\\leq&\varTheta_{0}(x_*)\!-\!
	\sum_{i=0}^{k} (
	\lambda\frac{A_iA_{i+1}}{a_{i+1}}\| T_{\lambda}(x_{i+1})\!-\!T_{\lambda}(x_{i})\|^2
	\!+\!\frac{1}{2}\|v_{i+1}-v_i\|^2 )\!+\!\sum_{i=0}^{k}\|v_{i+1}-x_*\|\delta_i.
	\end{aligned}
\end{equation}
\end{small}
By ignoring some negative terms on the right side of \eqref{B19} and applying Lemma \ref{L1} with { $a_k=\frac{1}{\sqrt{2}}\|v_{k+1}-x_*\|$, $b_k=\sqrt{2}\delta_k$, and $c=\sqrt{\varTheta_{0}(x_*)}$, we obtain
\begin{equation*}
\frac{1}{\sqrt{2}}\|v_{k+1}-x_*\|\leq \sqrt{\varTheta_{0}(x_*)}+\sqrt{2}\sum_{i=0}^{\infty}\delta_i<+\infty.
\end{equation*} }
Consequently, we can conclude that $\{v_k\}$ is bounded. Furthermore, since $\delta_k$ is summable,  the right side of \eqref{B19} is also bounded. Finally, by utilizing \eqref{B19},
we can immediately deduce the remaining conclusions.
\end{proof}

\begin{theorem}\label{thm3.3}
The iterative sequence $\{x_k\}$  generated by the Contracting PPA satisfies that
		\begin{eqnarray}
	\label{B38}	\|T_{\lambda}(x_{k+1})\|\leq O(1/A_{k+1}),\\
\label{B39}		\mathtt{Gap}(x_{k+1})\leq O(1/A_{k+1}).
		\end{eqnarray}
\end{theorem}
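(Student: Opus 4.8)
The plan is to exploit a telescoping (conservation) structure hidden in the defining relation \eqref{B8}, rather than the potential function $\varTheta$. The reason to avoid $\varTheta$ is that it only delivers the weaker rate $O(1/\sqrt{A_{k+1}})$: from \eqref{B19} the quantity $\varTheta_{k+1}(x_*)$ is uniformly bounded, while $\lambda$-cocoercivity together with $T_\lambda(x_*)=0$ gives $\varTheta_{k+1}(x_*)\ge A_{k+1}\lambda\|T_\lambda(x_{k+1})\|^2$, so this route yields at best $\|T_\lambda(x_{k+1})\|=O(1/\sqrt{A_{k+1}})$. To reach the sharper $O(1/A_{k+1})$ I would introduce the auxiliary vector $s_k:=A_kT_\lambda(x_k)+v_k$ and read \eqref{B8} as the perturbed one-step recursion $s_{k+1}=s_k-g_k$. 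This extends to the inexact setting the conservation identity already observed for the exact case, where $s_k\equiv s_0=A_0T_\lambda(x_0)+v_0$. Iterating gives $s_{k+1}=s_0-\sum_{i=0}^{k}g_i$, and since $\|g_i\|\le\delta_i$ with $\sum_i\delta_i<+\infty$, the sequence $\{s_k\}$ is uniformly bounded.

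To prove \eqref{B38}, I would write $A_{k+1}T_\lambda(x_{k+1})=s_{k+1}-v_{k+1}$ and combine the boundedness of $\{s_k\}$ with the boundedness of $\{v_k\}$ established in Lemma \ref{LC}(1). This gives $A_{k+1}\|T_\lambda(x_{k+1})\|\le\|s_0\|+\sum_{i=0}^{\infty}\delta_i+\sup_k\|v_k\|=:C<+\infty$, and dividing by $A_{k+1}$ yields $\|T_\lambda(x_{k+1})\|\le C/A_{k+1}$, which is exactly \eqref{B38}.

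For \eqref{B39} I first need the boundedness of $\{x_k\}$. Using the definition \eqref{B8v}, the identity $a_kv_k=A_kx_k-A_{k-1}x_{k-1}$ telescopes to $A_kx_k=A_0x_0+\sum_{i=1}^{k}a_iv_i$, so $x_k$ is a convex combination (weights $A_0/A_k$ and $a_i/A_k$, which sum to one since $A_k=A_0+\sum_{i=1}^{k}a_i$) of the fixed starting point $x_0$ and the bounded iterates $\{v_i\}$; hence $\{x_k\}$ is bounded. Then for every $y\in\mathbb{B}(x_0,D)$, monotonicity of $T_\lambda$ gives $\langle x_{k+1}-y,T_\lambda(y)\rangle\le\langle x_{k+1}-y,T_\lambda(x_{k+1})\rangle\le\|x_{k+1}-y\|\,\|T_\lambda(x_{k+1})\|$, and $\|x_{k+1}-y\|\le\|x_{k+1}-x_0\|+D$ is uniformly bounded by some $M$. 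Taking the maximum over $y$ and inserting \eqref{B38} gives $\mathtt{Gap}(x_{k+1})\le M\|T_\lambda(x_{k+1})\|=O(1/A_{k+1})$.

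The main obstacle is conceptual rather than computational: one must recognize that \eqref{B8} is a perturbed conservation law for $s_k=A_kT_\lambda(x_k)+v_k$, since the natural energy/potential estimate only degrades the rate to $O(1/\sqrt{A_{k+1}})$. Once this telescoping is in hand, the entire argument reduces to two facts already available, namely the boundedness of $\{v_k\}$ (Lemma \ref{LC}(1)) and the summability of the errors $\{\delta_k\}$, with the Gap estimate following routinely from monotonicity and Cauchy--Schwarz after the boundedness of $\{x_k\}$ is established.
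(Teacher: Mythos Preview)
Your argument for \eqref{B38} is essentially identical to the paper's: both read \eqref{B8} as the perturbed conservation law $s_{k+1}=s_k-g_k$ with $s_k:=A_kT_\lambda(x_k)+v_k$, deduce $\|s_{k+1}\|\le\|s_0\|+\sum_i\delta_i$, and then subtract the bounded $v_{k+1}$ (Lemma~\ref{LC}(1)).

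For \eqref{B39} you take a genuinely different route. The paper does \emph{not} avoid the potential function here; it uses monotonicity together with the decrease estimate \eqref{B17} to obtain
\[
A_{k+1}\langle x_{k+1}-x,T_\lambda(x)\rangle\le A_{k+1}\langle x_{k+1}-x,T_\lambda(x_{k+1})\rangle\le\varTheta_{k+1}(x)\le\varTheta_0(x)-\sum_{i=0}^{k}\langle x-v_{i+1},g_i\rangle,
\]
which gives the $O(1/A_{k+1})$ rate for $\mathtt{Gap}$ directly, without ever needing $\{x_k\}$ to be bounded. Your approach instead establishes boundedness of $\{x_k\}$ via the nice observation that $x_k=\sum_{i=0}^{k}(a_i/A_k)v_i$ is a convex combination of the bounded $\{v_i\}$, and then feeds \eqref{B38} into the crude bound $\mathtt{Gap}(x_{k+1})\le M\|T_\lambda(x_{k+1})\|$. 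Both arguments are correct; the paper's is slightly shorter and gives an explicit constant in terms of $\varTheta_0$, while yours is more self-contained once \eqref{B38} is in hand and has the side benefit of proving $\{x_k\}$ bounded. Your opening remark that $\varTheta$ ``only delivers $O(1/\sqrt{A_{k+1}})$'' is accurate for $\|T_\lambda\|$ but misleading for $\mathtt{Gap}$: as the paper shows, the potential function yields the sharp rate for the latter.
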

\begin{proof}
	Due to the iteration \eqref{B8} in the Contracting PPA, we have
	\[
	\begin{aligned}
		&\|A_{k+1}T_{\lambda}(x_{k+1})+v_{k+1}\|=\|A_{k}T_{\lambda}(x_{k})+v_k-g_k\|\\\leq&
		\|A_{k}T_{\lambda}(x_{k})+v_k\|+\|g_k\|\leq
		\|A_{0}T_{\lambda}(x_{0})+v_0\|+\sum_{i=0}^{k}\|g_i\|\\
\leq&\|A_{0}T_{\lambda}(x_{0})+v_0\| +\sum_{i=0}^{k}\delta_i
		<+\infty.
			\end{aligned}
	\]
Then, since the sequence $\{v_{k+1}\}$ is bounded, we obtain
	\[
		\|A_{k+1}T_{\lambda}(x_{k+1})\|
		 \leq \|A_{k+1}T_{\lambda}(x_{k+1})+v_{k+1}\|+\|v_{k+1}\| 		<+\infty,
			\]
			which leads to \eqref{B38}. Then, by the monotonicity of  $T_{\lambda}$, we can establish that
	\begin{equation}\nonumber
	\begin{aligned}
&	A_{k+1}\langle x_{k+1}-x, T_{\lambda}(x)\rangle\leq
	A_{k+1}\langle x_{k+1}-x, T_{\lambda}(x_{k+1})\rangle
\\\overset{\eqref{B16}}{\leq}&	\varTheta_{k+1}(x)\overset{\eqref{B17}}{\leq}	\varTheta_{k}(x)-
	\langle
x-	v_{k+1}, g_k
	\rangle\leq\varTheta_{0}(x)-\sum_{i=0}^{k}
	\langle
	x-v_{i+1}, g_i
	\rangle.
	\end{aligned}
	\end{equation}
	Subsequently, the inequality \eqref{B39} can be deduced from the definition of $\mathtt{Gap}(x)$ and the summablity of $\|g_k\|$.
\end{proof}

Before concluding this section, we establish the equivalence between the Halpern iteration and the Contracting PPA. The Halpern iteration was first proposed by Halpern \cite{halpern1967fixed}, and Lieder \cite{lieder2021convergence} established $O(1/k)$ convergence of $\|x_k-Nx_k\|$ compared to the $O(1/\sqrt{k})$ convergence of the Krasnosel'skii iteration \cite{Tworemarks}.
Recall that the Halpern iteration for solving non-expansive mappings is given by
\begin{equation}\label{Halpern}
	z_{k+1}=\frac{1}{k+2}z_0+\frac{k+1}{k+2}Nz_k,
\end{equation}
where $N:\mathcal{H}\longrightarrow {\mathcal{H}}$ is a non-expansive mapping satisfying
$$\|Nx-Ny\|\leq\|x-y\|,~~x,y\in\mathcal{H}.$$
The following theorem establishes the equivalence of the Halpern iteration \eqref{Halpern}  and the Contracting PPA under a special parameter setting.

\begin{theorem}\label{EQHAL}
Let  $\{z_k\}$ be generated by the Halpern iteration \eqref{Halpern} with $N=J_{T_{\lambda}}$. Then, $J_{T_{\lambda}}(	z_{k+1})$ is equivalent to using the
 Contracting PPA with $g_k\equiv0$, $a_k\equiv1$, $A_0=1$, and $A_{0}T_{\lambda}(x_{0})+v_0=z_0$.
\end{theorem}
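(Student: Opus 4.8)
The plan is to collapse both iterations to the same one-step recursion and then match their initial data. First I would specialize the closed form \eqref{B42} of the exact Contracting PPA (the case $g_k\equiv0$) to the prescribed parameters. With $a_k\equiv1$ and $A_0=1$, the recursion \eqref{B8v:0} gives $A_k=k+1$, so that $a_{k+1}/A_{k+1}=1/(k+2)$ and $A_k/A_{k+1}=(k+1)/(k+2)$. The crucial observation is that, since $a_{k+1}=1$, the Yosida resolvent identity \eqref{B41} taken at $\mu=1$ folds the composite operator appearing in \eqref{B42} back to a single resolvent,
\[
\frac{\lambda}{\lambda+1}I+\frac{1}{\lambda+1}J_{(\lambda+1)T}=J_{T_{\lambda}}.
\]
Hence the exact Contracting PPA reduces to the scalar-weighted fixed-point iteration
\[
x_{k+1}=J_{T_{\lambda}}\!\Big(\tfrac{1}{k+2}\,s_0+\tfrac{k+1}{k+2}\,x_k\Big),\qquad s_0=A_0T_{\lambda}(x_0)+v_0.
\]

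Second, I would recast the Halpern iteration \eqref{Halpern} with $N=J_{T_{\lambda}}$ into the identical form. Setting $x_k:=J_{T_{\lambda}}(z_k)=Nz_k$ and substituting the Halpern update $z_{k+1}=\frac{1}{k+2}z_0+\frac{k+1}{k+2}Nz_k$ into $x_{k+1}=J_{T_{\lambda}}(z_{k+1})$ yields
\[
x_{k+1}=J_{T_{\lambda}}\!\Big(\tfrac{1}{k+2}\,z_0+\tfrac{k+1}{k+2}\,x_k\Big).
\]
This is literally the reduced Contracting PPA recursion above once $s_0$ is identified with $z_0$, which is exactly the prescribed hypothesis $A_0T_{\lambda}(x_0)+v_0=z_0$. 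It then remains only to verify the base case, after which I would conclude by induction that $J_{T_{\lambda}}(z_{k+1})=x_{k+1}$ for every $k\ge0$, establishing the equivalence.

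The one delicate point, and the step I would treat most carefully, is the consistency of the initial data. Evaluating \eqref{B8v} at $k=0$ with $A_{-1}=0$, $A_0=1$, and $a_0=1$ gives $v_0=x_0$, so the constant is $s_0=T_{\lambda}(x_0)+x_0=(I+T_{\lambda})x_0=J_{T_{\lambda}}^{-1}(x_0)$. The hypothesis $s_0=z_0$ therefore forces $x_0=J_{T_{\lambda}}(z_0)$, which is precisely the base case of the correspondence $x_k=J_{T_{\lambda}}(z_k)$ needed to launch the induction. Once this alignment is checked, the two recursions are term-by-term identical. I expect no genuine analytic difficulty here: the entire content lies in recognizing that the choice $a_{k+1}\equiv1$ makes \eqref{B41} reduce the inner resolvent to $N=J_{T_{\lambda}}$, after which the Contracting weights $\tfrac{1}{k+2},\tfrac{k+1}{k+2}$ coincide exactly with the Halpern weights.
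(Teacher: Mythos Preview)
Your proposal is correct and follows essentially the same route as the paper: both reduce the Halpern step with $N=J_{T_\lambda}$ to $x_{k+1}=J_{T_\lambda}\bigl(\tfrac{1}{k+2}z_0+\tfrac{k+1}{k+2}x_k\bigr)$ via $x_k:=J_{T_\lambda}(z_k)$, and both identify this with the specialized form of \eqref{B42} using \eqref{B41} at $\mu=1$. Your treatment is in fact slightly more careful than the paper's, since you explicitly verify the base case $x_0=J_{T_\lambda}(z_0)$ forced by $s_0=z_0$, which the paper leaves implicit.
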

\begin{proof}
Setting $N=J_{T_{\lambda}}$ in the Halpern iteration \eqref{Halpern} yields that
$$
	z_{k+1}=\frac{1}{k+2}z_0+\frac{k+1}{k+2}J_{T_{\lambda}}(z_k).
$$
Then, by projecting $J_{T_{\lambda}}$ onto $z_{k+1}$ and defining $x_{k+1}:=J_{T_{\lambda}}(	z_{k+1})$, we have
\begin{eqnarray}
J_{T_{\lambda}}(	z_{k+1})&=&J_{T_{\lambda}} \left(\frac{1}{k+2}z_0+\frac{k+1}{k+2}J_{T_{\lambda}}(z_k)\right)\nonumber\\\Longleftrightarrow
x_{k+1}&=&J_{T_{\lambda}} \left(\frac{1}{k+2}z_0+\frac{k+1}{k+2}x_k\right)\nonumber\\&=&\left(\frac{\lambda}{1+\lambda}I+\frac{1}{1+\lambda}J_{(1+\lambda)T}\right) \left(\frac{1}{k+2}z_0+\frac{k+1}{k+2}x_k\right).\label{B45}
\end{eqnarray}
Therefore,  \eqref{B45} is exactly equivalent to the  Contracting PPA \eqref{B42} with the parameter settings  $g_k\equiv0$, $a_k\equiv1$, $A_0=1$, and $A_{0}T_{\lambda}(x_{0})+v_0=z_0$.
\end{proof}

\section{Guaranteed sublinear convergence Contracting PPA}
We must emphasize that there is no guarantee of convergence for the Contracting PPA. In this section, we will focus on the sublinear ($O(1/k^p)$, $p\geq1$) convergence rate of the Contracting PPA with a guaranteed convergence. Instead of directly considering the convergence of the Contracting PPA, we will consider the following new method called {\bf Sublinear Contracting PPA}.

\begin{framed}
\noindent{\bf Sublinear Contracting PPA.}

\noindent
Given $\mu\in[0,1)$, $p\ge1$, $\tau_{-1}\in(0,1]$, $x_{-1}=x_0\in\mathcal{H}$, for $k=0,1,\cdots$, set
 \begin{eqnarray}
 \tau_k&=&\frac{1}{ak+1},~(0<a\leq \frac{1-\mu}{p}),\nonumber\\
			v_k&=&\frac{1}{\tau_{k-1}}x_k-\frac{1-\tau_{k-1}}{\tau_{k-1}}x_{k-1}.\label{B12v}
	\end{eqnarray}
Given $\lambda>0$, $\beta\ge0$,  and $\delta_k\ge0$ satisfying $\sum_{k\geq0}\delta_k<+\infty$,
find $x_{k+1}$ such that
\begin{equation}\label{B10}
0=\frac{1}{\tau_k^p}T_{\lambda}(x_{k+1})-\frac{1-\tau_k}{\tau_k^p}T_{\lambda}(x_{k})+(v_{k+1}-v_k)+\beta(x_{k+1}-x_k)+g_k,
\end{equation}
 where $g_k\in \mathcal{H}$  satisfies  that $\|g_k\|\leq\delta_k$.
\end{framed}

The sequence $\{\tau_k\}$ defined in the  Sublinear Contracting PPA has the following property.

\begin{lemma}\label{L8}
For $k\geq0$, it holds that $\tau_{k}\in(0,1]$ and  
	\begin{eqnarray}\label{B12}
	\frac{1-\tau_k}{\tau_k^p}&\leq&\frac{1}{\tau_{k-1}^p}-\frac{\mu}{\tau_k^{p-1}}.
	\end{eqnarray}
\end{lemma}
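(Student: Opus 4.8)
The plan is to first dispose of the membership $\tau_k\in(0,1]$ and then reduce the inequality \eqref{B12} to a single scalar inequality that is settled by the convexity of $s\mapsto s^p$. For the membership, since $a>0$ and $k\ge0$ we have $ak+1\ge1$, so $\tau_k=1/(ak+1)\in(0,1]$; I would also record at the outset that $a\le(1-\mu)/p\le 1/p\le1$, which guarantees that every base $t-a$ appearing below stays nonnegative so that the powers are well defined for non-integer $p$.

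For the main inequality I would move the $\mu$-term to the left and combine the two fractions over $\tau_k^p$, rewriting \eqref{B12} in the equivalent form
\[
\frac{1-(1-\mu)\tau_k}{\tau_k^p}\le\frac{1}{\tau_{k-1}^p}.
\]
The case $k=0$ must be treated on its own, because $\tau_{-1}$ is a free parameter in $(0,1]$ rather than a value of the defining formula. Here $\tau_0=1$, so the left-hand side equals $\mu$, while $\tau_{-1}\le1$ gives $1/\tau_{-1}^p\ge1>\mu$, and the inequality holds. For $k\ge1$ I set $t:=1/\tau_k=ak+1$, so that $1/\tau_{k-1}=a(k-1)+1=t-a$ with $t>1$ and $t-a\ge1>0$. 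Clearing $\tau_k^p$ and $\tau_{k-1}^p$, the displayed inequality becomes the purely scalar statement
\[
t^{p-1}\bigl(t-(1-\mu)\bigr)\le(t-a)^p.
\]

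The key step is the tangent-line (gradient) inequality for the convex function $s\mapsto s^p$ with $p\ge1$, applied at $s=t$ in the direction $-a$, namely
\[
(t-a)^p\ge t^p-pa\,t^{p-1}=t^{p-1}(t-pa).
\]
The admissible step-size condition $pa\le1-\mu$ then yields $t-pa\ge t-(1-\mu)$, and multiplying by $t^{p-1}\ge0$ chains the two bounds to give exactly the required scalar inequality, completing the proof for $k\ge1$. The only real obstacle is recognizing that this convexity bound together with the constraint $a\le(1-\mu)/p$ closes the gap precisely; once the inequality is brought to the form $t^{p-1}(t-(1-\mu))\le(t-a)^p$ everything else is routine bookkeeping.
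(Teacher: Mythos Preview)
Your proof is correct and follows essentially the same route as the paper: both reduce \eqref{B12} to the scalar bound $(ak+1)^p-(a(k-1)+1)^p\le ap(ak+1)^{p-1}\le(1-\mu)(ak+1)^{p-1}$, handle $k=0$ separately, and then invoke $ap\le1-\mu$. The only cosmetic difference is that the paper obtains the first bound via the Mean Value Theorem applied to $k\mapsto(ak+1)^p$, whereas you obtain the equivalent estimate from the tangent-line inequality for the convex function $s\mapsto s^p$.
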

\begin{proof}
	The definition of $\tau_k$ implies that $\tau_k\in(0,1]$. When $k=0$, \eqref{B12} clearly holds.
	We can verify that, for $k\geq1$,
	\[
	\begin{aligned} \frac{1}{\tau_k^p}-\frac{1}{\tau_{k-1}^p}=&(ak+1)^p-\big(a(k-1)+1\big)^p\\
	=&ap\big(a\xi+1\big)^{p-1} ~~({\rm for~some~}\xi\in(k-1,k){\rm~by~the~Mean~Value ~theorem})\\\leq&ap\big(ak+1\big)^{p-1}=ap\frac{1}{\tau_k^{p-1}}\leq(1-\mu)\frac{1}{\tau_k^{p-1}}.
	\end{aligned}
	\]
	Then we can derive the conclusion.
\end{proof}


\begin{rem}\label{rem1}
	We turn our attention to the selection of $\tau_k$ in Sublinear Contracting PPA.
	If we set
	$
	\frac{1}{\tau_k}:=\frac{A_{k+1}}{a_{k+1}},
	$ with $A_{k}=a_{k}+A_{k-1}$ and $A_{k+1}^{p-1}=a_{k+1}^p$
	then we have
	\begin{equation}\label{tau:1}
	\frac{1}{\tau_k^p}=\frac{A_{k+1}^p}{a_{k+1}^p}=A_{k+1},~{\rm and} ~\frac{1-\tau_k}{\tau_k^p}=\frac{A_{k}A_{k+1}^{p-1}}{a_{k+1}^p}=A_k.
	\end{equation}
	Therefore, by setting
	$\beta=0$, we can express equation \eqref{B10} as
	\[
	0=A_{k+1}T_{\lambda}(x_{k+1})-A_kT_{\lambda}(x_{k})+\frac{A_{k+1}x_{k+1}-A_kx_k}{a_{k+1}}-\frac{A_{k}x_{k}-A_{k-1}x_{k-1}}{a_{k}}+g_k,
	\]
	which is equivalent to \eqref{B8} in the Contracting PPA with $A_{k+1}^{p-1}=a_{k+1}^p$.
	With this additional assumption, we can deduce from \eqref{tau:1} that
	\begin{equation}\label{tau:2}
	\frac{1-\tau_k}{\tau_k^p}=\left(A_k=\frac{A_{k}A_{k}^{p-1}}{a_{k}^p}=\frac{A_{k}^{p}}{a_{k}^p}=\right)\frac{1}{\tau_{k-1}^p}.
	\end{equation}
	So, if $p$ is either an integer greater than $4$ or not an integer, calculating $\tau_k$ from \eqref{tau:2} may not be straightforward, even if $\tau_{k-1}$ is given. Therefore, the inequality in condition \eqref{B12} is used for a more efficient calculation of $\tau_k$. Additionally, it should be noted that in order to establish the weak convergence of the iterations, we assume that $\mu>0$ and $\beta>0$, which are crucial assumptions.
\end{rem}

We introduce the following  potential function:
\begin{equation}\label{B13}
	\varTheta_{k+1}'(x):=\frac{1}{\tau_k^p}\langle x_{k+1}-x, T_{\lambda}(x_{k+1})\rangle+\frac{1}{2}\|v_{k+1}-x\|^2+\frac{\beta}{2}\|x_{k+1}-x\|^2,
\end{equation}
which will play a significant role in the analysis of the convergence rate and weak convergence of the Sublinear Contracting PPA.

\begin{lemma}\label{L7}
Suppose $x_*\in T^{-1}(0)$. The iterative sequence $\{x_k\}$ generated by  the Sublinear Contracting PPA  satisfies that
\begin{equation}\label{B14}
\begin{aligned}
		\varTheta_{k+1}'(x_*)\leq&	\varTheta_{k}'(x_*)-
		\left(
		\frac{1-\tau_k}{\tau_k^{p+1}}\langle x_{k+1}-x_k, T_{\lambda}(x_{k+1})-T_{\lambda}(x_{k}) \rangle
		+\frac{1}{2}\|v_{k+1}-v_k\|^2\right.\\&\left.+\frac{\mu}{\tau_k^{p-1}}\langle x_{k}-x_*, T_{\lambda}(x_{k})\rangle+\beta\frac{2-\tau_k}{2\tau_k}\|x_{k+1}-x_k\|^2+\langle
		x_*-v_{k+1}, g_k
		\rangle
		\right).
		\end{aligned}
\end{equation}
In addition,
\begin{itemize}
		\item[\textnormal{({1})}] $\{v_k\}$ is bounded.
	\item[\textnormal{({2})}]	$\sum_{k\geq0}\vert\langle
	v_{k+1}-x_*, g_k
	\rangle\vert<+\infty.$
		\item[\textnormal{({3})}]
$	\sum_{k\geq0}\frac{\mu}{\tau_k^{p-1}}\langle x_{k}-x_*, T_{\lambda}(x_{k})\rangle<+\infty$.
	\item[\textnormal{({4})}]
$	\sum_{k\geq0}	\frac{1-\tau_k}{\tau_k^{p+1}} \| T_{\lambda}(x_{k+1})-T_{\lambda}(x_{k})\|^2<+\infty$.
	\item[\textnormal{({5})}]
	$	\sum_{k\geq0} \|v_{k+1}-v_k\|^2<+\infty.$
	\item[\textnormal{({6})}]$\sum_{k\geq0}  	\beta\frac{2-\tau_k}{2\tau_k}\|x_{k+1}-x_k\|^2	<+\infty.$
\end{itemize}
\end{lemma}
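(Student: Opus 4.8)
The plan is to follow the blueprint of Lemma~\ref{LC} almost verbatim, adapting it to the extra proximal term $\beta(x_{k+1}-x_k)$ and to the variable coefficients $1/\tau_k^p$. First I would take the inner product of the defining equation \eqref{B10} with $x_*-v_{k+1}$, so that its left-hand side vanishes. The key preliminary observation is that \eqref{B12v} can be written in the two equivalent forms $v_{k+1}=x_{k+1}+\frac{1-\tau_k}{\tau_k}(x_{k+1}-x_k)=x_k+\frac{1}{\tau_k}(x_{k+1}-x_k)$; these two representations are exactly what let me relate $x-v_{k+1}$ to either $x_{k+1}$ or $x_k$, as needed for the two operator terms.

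Using them I would establish four elementary identities mirroring (i)--(iii) of Lemma~\ref{LC}: one for the $\frac{1}{\tau_k^p}T_\lambda(x_{k+1})$ term, one for the $-\frac{1-\tau_k}{\tau_k^p}T_\lambda(x_k)$ term (together these produce the cross term $-\frac{1-\tau_k}{\tau_k^{p+1}}\langle x_{k+1}-x_k,T_\lambda(x_{k+1})-T_\lambda(x_k)\rangle$), the polarization identity for $\langle x-v_{k+1},v_{k+1}-v_k\rangle$, and a new identity for the proximal term $\langle x-v_{k+1},\beta(x_{k+1}-x_k)\rangle$. For this last one, splitting $x-v_{k+1}=(x-x_{k+1})-\frac{1-\tau_k}{\tau_k}(x_{k+1}-x_k)$ and applying polarization yields precisely the coefficient $-\beta\frac{2-\tau_k}{2\tau_k}$ on $\|x_{k+1}-x_k\|^2$, since $\frac12+\frac{1-\tau_k}{\tau_k}=\frac{2-\tau_k}{2\tau_k}$. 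Adding the four identities and the $g_k$ contribution and collecting the $\varTheta'$-pieces then gives an exact recursion.

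The crucial step — and the only place the structure of $\tau_k$ is used — is the passage from the coefficient $\frac{1-\tau_k}{\tau_k^p}$ that appears after summation to the coefficient $\frac{1}{\tau_{k-1}^p}$ required to reconstruct $\varTheta_k'(x_*)$. Setting $x=x_*$ and recalling $T_\lambda(x_*)=0$, the $\lambda$-cocoercivity of $T_\lambda$ gives $\langle x_k-x_*,T_\lambda(x_k)\rangle\ge\lambda\|T_\lambda(x_k)\|^2\ge0$. Because this quantity is nonnegative, I may multiply the inequality \eqref{B12} of Lemma~\ref{L8} by it without reversing the sense, which converts $\frac{1-\tau_k}{\tau_k^p}\langle x_k-x_*,T_\lambda(x_k)\rangle$ into $\frac{1}{\tau_{k-1}^p}\langle x_k-x_*,T_\lambda(x_k)\rangle$ minus the genuinely new term $\frac{\mu}{\tau_k^{p-1}}\langle x_k-x_*,T_\lambda(x_k)\rangle$, which is exactly the additional summand in \eqref{B14}. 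I expect this to be the main obstacle: the algebra is otherwise routine bookkeeping, but the recursion closes only because the sign condition makes Lemma~\ref{L8} applicable to this specific inner product.

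Finally, for the six auxiliary conclusions I would argue exactly as in Lemma~\ref{LC}. Each summand inside the parentheses of \eqref{B14} is nonnegative: the first by cocoercivity together with $1-\tau_k\ge0$ (from $\tau_k\in(0,1]$), the third by the sign computation above with $\mu\ge0$, and the fourth because $\tau_k\in(0,1]$ and $\beta\ge0$ force $\beta\frac{2-\tau_k}{2\tau_k}\ge0$, the norm-squared term being trivially nonnegative. Dropping these, telescoping, and using $\varTheta_{k+1}'(x_*)\ge\frac12\|v_{k+1}-x_*\|^2$ (since the first and third terms of \eqref{B13} are nonnegative at $x_*$) together with $|\langle x_*-v_{k+1},g_k\rangle|\le\|v_{k+1}-x_*\|\delta_k$, I obtain $\frac12\|v_{k+1}-x_*\|^2\le\varTheta_0'(x_*)+\sum_i\|v_{i+1}-x_*\|\delta_i$. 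Applying Lemma~\ref{L1} with $a_k=\frac{1}{\sqrt{2}}\|v_{k+1}-x_*\|$, $b_k=\sqrt{2}\delta_k$, $c=\sqrt{\varTheta_0'(x_*)}$ yields the boundedness of $\{v_k\}$, i.e.\ conclusion (1); conclusion (2) follows since $\{v_k\}$ is bounded and $\sum_k\delta_k<+\infty$; and with (2) in hand the right-hand side of the telescoped inequality is bounded, so the partial sums of the four remaining nonnegative quantities are bounded above, giving (3)--(6).
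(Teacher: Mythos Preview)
Your proposal is correct and follows essentially the same approach as the paper's proof: the paper also takes the inner product of \eqref{B10} with $x_*-v_{k+1}$, establishes the same four identities (i)--(iv) using the two representations of $v_{k+1}$, applies Lemma~\ref{L8} to the coefficient of $\langle x_*-x_k,T_\lambda(x_k)\rangle$ (relying implicitly on the sign you made explicit), and then defers conclusions (1)--(6) to the argument of Lemma~\ref{LC}. If anything, your write-up is more explicit than the paper's about why the inequality \eqref{B12} may be applied and about how Lemma~\ref{L1} yields the boundedness of $\{v_k\}$.
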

We give the proof in Appendix \ref{APb} for better reading. The following theorem clarifies the sublinear convergence rate.

\begin{theorem}
If  $x_*\in T^{-1}(0)$, then the iterative sequence $\{x_k\}$ generated by the Sublinear Contracting PPA  satisfies that
\begin{eqnarray}
	\label{B34} 	\|T_{\lambda}(x_{k+1})\|\leq O(\tau_k^p)= O(1/k^p),\\
\label{B35}	\mathtt{Gap}(x_{k+1})\leq O(\tau_k^p)= O(1/k^p).
\end{eqnarray}
\end{theorem}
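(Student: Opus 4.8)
The plan is to establish \eqref{B34} and \eqref{B35} in parallel with the proof of Theorem \ref{thm3.3}: the bound on $\|T_{\lambda}(\cdot)\|$ should come from a nearly conserved ``momentum'' vector built from \eqref{B10}, while the bound on $\mathtt{Gap}(\cdot)$ should come from the boundedness of the potential \eqref{B13} combined with the monotonicity of $T_{\lambda}$. Throughout I would use that $\{v_k\}$ is bounded (Lemma \ref{L7}(1)) and that $\{x_k\}$ is bounded (read off from the $\frac{\beta}{2}\|x_{k+1}-x\|^2$ term in \eqref{B13} when $\beta>0$), together with the summability statements Lemma \ref{L7}(2)--(6) and the $\lambda$-cocoercivity of $T_{\lambda}$, which gives $\langle x_k-x_*,T_{\lambda}(x_k)\rangle\ge\lambda\|T_{\lambda}(x_k)\|^2$ since $T_{\lambda}(x_*)=0$.

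For \eqref{B34} I would set $s_{k+1}:=\frac{1}{\tau_k^p}T_{\lambda}(x_{k+1})+v_{k+1}+\beta x_{k+1}$ and rearrange \eqref{B10} into the recursion $s_{k+1}=s_k-c_kT_{\lambda}(x_k)-g_k$, where $c_k:=\frac{1}{\tau_{k-1}^p}-\frac{1-\tau_k}{\tau_k^p}$ satisfies $c_k\ge\mu/\tau_k^{\,p-1}\ge0$ by Lemma \ref{L8}. In the exactly-telescoping regime of Remark \ref{rem1} (where \eqref{B12} holds with equality, so $c_k\equiv0$) one gets $\|s_{k+1}\|\le\|s_0\|+\sum_{i=0}^{k}\|g_i\|<+\infty$; since $\{v_k\}$ and $\{x_k\}$ are bounded this forces $\frac{1}{\tau_k^p}\|T_{\lambda}(x_{k+1})\|=O(1)$, which is exactly \eqref{B34}. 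In the general case the telescoping is inexact, and summing the recursion gives $\frac{1}{\tau_k^p}T_{\lambda}(x_{k+1})=s_0-v_{k+1}-\beta x_{k+1}-\sum_{i=0}^{k}g_i-\Sigma_k$, with $\Sigma_k:=\sum_{i=0}^{k}c_iT_{\lambda}(x_i)$, so \eqref{B34} is equivalent to showing that the partial sums $\{\Sigma_k\}$ are bounded in $\mathcal H$.

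For \eqref{B35} I would invoke monotonicity: for every $x\in\mathbb{B}(x_0,D)$ one has $\langle x_{k+1}-x,T_{\lambda}(x)\rangle\le\langle x_{k+1}-x,T_{\lambda}(x_{k+1})\rangle$, and since the last two terms of \eqref{B13} are nonnegative this yields $\langle x_{k+1}-x,T_{\lambda}(x)\rangle\le\tau_k^p\,\varTheta_{k+1}'(x)$. It then suffices to bound $\varTheta_{k+1}'(x)$ uniformly over $x\in\mathbb{B}(x_0,D)$; taking the maximum over $x$ and using the definition of $\mathtt{Gap}$ produces the factor $\tau_k^p=O(1/k^p)$. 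The uniform bound would follow from the general-$x$ analogue of the recursion \eqref{B14}: after dropping the nonnegative difference terms one is left with $\varTheta_{k+1}'(x)\le\varTheta_0'(x)+\sum_i\langle v_{i+1}-x,g_i\rangle-\sum_i c_i\langle x_i-x,T_{\lambda}(x_i)\rangle$, and splitting $\langle x_i-x,T_{\lambda}(x_i)\rangle=\langle x_i-x_*,T_{\lambda}(x_i)\rangle+\langle x_*-x,T_{\lambda}(x_i)\rangle$ makes the first piece summable by Lemma \ref{L7}(3) and reduces the only remaining term to $\langle x_*-x,\Sigma_k\rangle$. Thus, exactly as for \eqref{B34}, everything again reduces to the boundedness of $\{\Sigma_k\}$.

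The main obstacle is therefore precisely the boundedness of $\Sigma_k=\sum_{i=0}^{k}c_iT_{\lambda}(x_i)$. A triangle-inequality estimate is too lossy: Lemma \ref{L8} forces $c_i$ to be of order $1/\tau_i^{\,p-1}$, while the target rate only makes $\|T_{\lambda}(x_i)\|$ decay like $\tau_i^p$, so $\sum_i c_i\|T_{\lambda}(x_i)\|$ need not converge; moreover the cocoercivity consequence of Lemma \ref{L7}(3) controls only $\sum_i c_i\|T_{\lambda}(x_i)\|^2$, not the first power, and the potential \eqref{B13} on its own yields merely the weaker rate $\|T_{\lambda}(x_{k+1})\|=O(\tau_k^{p/2})$. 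Boundedness of $\Sigma_k$ must instead be extracted from cancellation, which is the delicate heart of the argument: I would apply summation by parts, feeding in the slow-variation estimate $\sum_i\frac{1-\tau_i}{\tau_i^{p+1}}\|T_{\lambda}(x_{i+1})-T_{\lambda}(x_i)\|^2<+\infty$ of Lemma \ref{L7}(4) to control the increments of $T_{\lambda}(x_i)$, complemented by a monotonicity argument for $\frac{1}{\tau_k^p}\|T_{\lambda}(x_{k+1})\|$ in the spirit of Theorem \ref{thm2.1}(6), so that the full $O(1/k^p)$ rate is recovered rather than its square root.
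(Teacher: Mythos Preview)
Your structural setup for \eqref{B34} is right: summing \eqref{B10} indeed yields
\[
\frac{1}{\tau_k^p}T_{\lambda}(x_{k+1})+\Sigma_k=b_k,
\]
with $b_k$ bounded and $\Sigma_k=\sum_{i}c_iT_{\lambda}(x_i)$, $c_i=\frac{1}{\tau_{i-1}^p}-\frac{1-\tau_i}{\tau_i^p}$, and you correctly identify the crux as the boundedness of $\{\Sigma_k\}$. But your proposed mechanism---summation by parts fed by Lemma~\ref{L7}(4) plus a monotonicity argument in the style of Theorem~\ref{thm2.1}(6)---is not how the paper closes this gap, and it is not clear it can be made to work: Lemma~\ref{L7}(4) controls only a weighted $\ell^2$ norm of the increments of $T_{\lambda}(x_i)$, which is too weak to force boundedness of the partial sums without already knowing the rate you are trying to prove.

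The trick you are missing is purely algebraic. The next increment of $\Sigma$ is $c_{k+1}T_{\lambda}(x_{k+1})$, so the residual term on the left is a scalar multiple of that increment:
\[
\frac{1}{\tau_k^p}T_{\lambda}(x_{k+1})=\frac{1}{c_{k+1}\tau_k^p}\,(\Sigma_{k+1}-\Sigma_k).
\]
Substituting this back turns the identity into a closed recursion
\[
\Sigma_{k+1}=(1-c_{k+1}\tau_k^p)\,\Sigma_k+c_{k+1}\tau_k^p\,b_k,
\]
and since $c_{k+1}\tau_k^p\in[0,1)$ (from Lemma~\ref{L8}) this is a convex combination of $\Sigma_k$ and the bounded sequence $b_k$; taking norms and invoking Lemma~\ref{L2} gives $\sup_k\|\Sigma_k\|<+\infty$ at once. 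No cancellation or slow-variation estimate is needed.

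Two secondary points. First, your boundedness argument for $\{x_k\}$ via the $\frac{\beta}{2}\|x_{k+1}-x\|^2$ term requires $\beta>0$, but the theorem allows $\beta\ge0$; the paper instead expands $\|v_{k+1}-x_*\|^2$ via the identity $\|v_{k+1}-x_*\|^2=\frac{1-\tau_k}{\tau_k^2}\|x_{k+1}-x_k\|^2+\frac{1}{\tau_k}\|x_{k+1}-x_*\|^2-\frac{1-\tau_k}{\tau_k}\|x_k-x_*\|^2$ and obtains a recursion $\|x_{k+1}-x_*\|^2\le(1-\tau_k)\|x_k-x_*\|^2+2\tau_kc$, so Lemma~\ref{L2} applies without any assumption on $\beta$. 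Second, for \eqref{B35} the paper does not track $\varTheta_{k+1}'(x)$ for general $x$ at all: once \eqref{B34} is in hand, the simple chain $\langle x_{k+1}-x,T_{\lambda}(x)\rangle\le\langle x_{k+1}-x,T_{\lambda}(x_{k+1})\rangle\le\|x_{k+1}-x\|\,\|T_{\lambda}(x_{k+1})\|$ together with the boundedness of $\{x_k\}$ gives \eqref{B35} immediately.
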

\begin{proof}
To begin, we will show that $\{x_k\}$ is bounded.
	Based on the definition of $v_{k+1}$, we have
	\begin{equation}\label{B47}
	\|v_{k+1}-x_*\|^2 =\frac{1-\tau_k}{\tau_k^2}\|x_{k+1}-x_k\|^2+\frac{1}{\tau_k}\|x_{k+1}-x_*\|^2-\frac{1-\tau_k}{\tau_k}\|x_{k}-x_*\|^2.
	\end{equation}
	Using the above equality and the definition of $\varTheta_{k+1}'(x)$ in \eqref{B13}, we can then derive \begin{equation}\label{B21}
		\begin{aligned}
\varTheta_{k+1}'(x_*)=&\frac{1}{\tau_k^p}\langle x_{k+1}-x_*, T_{\lambda}(x_{k+1})\rangle+\frac{1-\tau_k}{2\tau_k^2}\|x_{k+1}-x_k\|^2\\&+\frac{1+\beta\tau_k}{2\tau_k}\|x_{k+1}-x_*\|^2-\frac{1-\tau_k}{2\tau_k}\|x_{k}-x_*\|^2.
		\end{aligned}
	\end{equation}
Consequently, we have
\begin{equation}\label{B15}
	\begin{aligned} &\frac{1}{2\tau_k}\|x_{k+1}-x_*\|^2-\frac{1-\tau_k}{2\tau_k}\|x_{k}-x_*\|^2\overset{\eqref{B47}}{\leq}\frac{1}{2}\|v_{k+1}-x_*\|^2
\overset{\eqref{B13}}{\leq}\varTheta_{k+1}'(x_*)\\
\overset{\eqref{B14}}{\leq}&\varTheta_{k}'(x_*)-\langle
		x_*-v_{k+1}, g_k		\rangle\leq\varTheta_{0}'(x_*)-\sum_{i=0}^{k}\langle
		x_*-v_{i+1}, g_i
		\rangle<c,
			\end{aligned}
	\end{equation}
where $c$ is a positive constant and the last inequality follows from the second conclusion of in Lemma \ref{L7}.
Moreover, \eqref{B15} is equivalent to:
\begin{equation*}
	\frac{1}{2}\|x_{k+1}-x_*\|^2\leq\frac{1-\tau_k}{2}\|x_{k}-x_*\|^2+\tau_kc.
\end{equation*}
By utilizing Lemma \ref{L2} with $a_k=	\frac{1}{2}\|x_{k}-x_*\|^2$ and $\eta_k=\tau_k$, we conclude that $ \|x_{k}-x_*\|^2$ is  bounded.  As a result, $\{x_k\}$  is also bounded.
	
By rearranging the iteration  \eqref{B10} in the Sublinear Contracting PPA  as
\begin{equation*}
	\begin{aligned}
&\frac{1}{\tau_k^p}T_{\lambda}(x_{k+1})-\frac{1}{\tau_{k-1}^p}T_{\lambda}(x_{k})+\left(
\frac{1}{\tau_{k-1}^p}-\frac{1-\tau_k}{\tau_k^p}
\right)T_{\lambda}(x_{k})\\=&(v_{k}-v_{k+1})+\beta(x_{k}-x_{k+1})-g_k,
	\end{aligned}
\end{equation*}
and then summing them from $k=1$ to $k=t$, we obtain
	\begin{equation}\label{B18}
		\begin{aligned}
		&\frac{1}{\tau_t^p}T_{\lambda}(x_{t+1})-\frac{1}{\tau_{0}^p}T_{\lambda}(x_{1})+\sum_{k=1}^{t}\left(
		\frac{1}{\tau_{k-1}^p}-\frac{1-\tau_k}{\tau_k^p}
		\right)T_{\lambda}(x_{k})\\=&(v_{1}-v_{t+1})+\beta(x_{1}-x_{t+1})-\sum_{k=1}^{t}g_k.
		\end{aligned}
	\end{equation}
For convenience, let us define
$$
h_k:=
\frac{1}{\tau_{k-1}^p}-\frac{1-\tau_k}{\tau_k^p},~a'_k:=h_kT_{\lambda}(x_{k}),~{\rm and}~s_k':=\sum_{i=1}^{k}a_i'.
$$
Then \eqref{B18} is equivalent to:
\begin{equation}\label{B20}
\begin{aligned}
b_t':&=(v_{1}-v_{t+1})+\beta(x_{1}-x_{t+1})+\frac{1}{\tau_{0}^p}T_{\lambda}(x_{1})-\sum_{k=1}^{t}g_k\\&=	\underbrace{\frac{1}{h_{t+1}\tau_t^p}a_{t+1}'}_{\frac{1}{\tau_t^p}T_{\lambda}(x_{t+1})}+s_t'
=\frac{1}{h_{t+1}\tau_t^p}(s_{t+1}'-s_t')+s_t'
.
\end{aligned}
\end{equation}	
Due to the boundedness of $\{x_t\}$ and $\{v_t\}$, as well as the summability of $\|g_k\|$, we can conclude that  $\{b_t'\}$ is bounded. Furthermore, combining this result with \eqref{B20},  we can conclude that there exists a constant $C>0$ such that
 \begin{equation}\nonumber
s_{t+1}'\leq(1-h_{t+1}\tau_t^p)s_t'+h_{t+1}\tau_t^pb_t'\leq(1-h_{t+1}\tau_t^p)s_t'+h_{t+1}\tau_t^pC.
\end{equation}
By observing that
$$0\leq\frac{\mu\tau_k^p}{\tau_{k+1}^{p-1}}\overset{\eqref{B12}}{\leq}
\underbrace{\Big(\frac{1}{\tau_{k}^p}-\frac{1-\tau_{k+1}}{\tau_{k+1}^p}\Big)\tau_k^p}_{h_{k+1}\tau_k^p}<1,
$$
and then applying Lemma \ref{L2} with $a_k=s_k'$, $\eta_k=h_{t+1}\tau_t^p$, and $c=C$, we can conclude that $\{s_k'\}$ is bounded.
Additionally, noting that
$$
\frac{1}{\tau_t^p}\|T_{\lambda}(x_{t+1})\|=\|b_t'-s_t'\|<\|b_t'\|+\|s_t'\|<+\infty,
$$ we can prove \eqref{B34}.
Then according to
\begin{equation}\label{B59}
\begin{aligned}
 &\langle x_{k+1}-x, T_{\lambda}(x)\rangle    \leq  \langle x_{k+1}-x, T_{\lambda}(x_{k+1})\rangle \leq\|x_{k+1}-x\| \|T_{\lambda}(x_{k+1})\|,
 \end{aligned}
	\end{equation}
\eqref{B34}, the boundedness of $x_k$	 and the definition of $\mathtt{Gap}(x_{k+1})$, we can derive \eqref{B35}.
\end{proof}

The following theorem establishes the weak convergence of the Sublinear Contracting PPA under the conditions $\mu\in(0,1)$  and $\beta>0$.

\begin{theorem}
	Let $\{x_k\}$ be the sequence generated by  the Sublinear Contracting PPA  with $\mu\in(0,1)$  and $\beta>0$.  Then the sequence $\{x_k\}$ converges weakly to a solution  of \eqref{B}.
\end{theorem}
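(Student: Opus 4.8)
The plan is to invoke the Opial-type criterion of Lemma~\ref{L4}: it suffices to verify (i) that $\lim_{k\to\infty}\|x_k-x_*\|$ exists for every $x_*\in T^{-1}(0)$, and (ii) that every weak sequential cluster point of $\{x_k\}$ solves \eqref{B}. Throughout I use that $x_*\in T^{-1}(0)$ forces $T_\lambda(x_*)=0$, so the cocoercivity of $T_\lambda$ reads $\langle x_k-x_*,T_\lambda(x_k)\rangle\ge\lambda\|T_\lambda(x_k)\|^2$.

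Condition (ii) is the easy half and mirrors the proof of Theorem~\ref{thm2.1}(5). The rate \eqref{B34} gives $\|T_\lambda(x_{k+1})\|\le O(\tau_k^p)\to0$, and $\{x_k\}$ is bounded (established in the preceding theorem). Hence, if a subsequence $x_{k_j}\rightharpoonup\bar x$, the demiclosedness statement of Lemma~\ref{L6} applied to $T_\lambda(x_{k_j})\to0$ yields $T_\lambda(\bar x)=0$, i.e.\ $\bar x$ is a solution of \eqref{B}.

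Condition (i) is the main obstacle, and this is exactly where the standing assumptions $\mu\in(0,1)$ and $\beta>0$ enter. First I would run the monotone-potential argument: setting $x=x_*$ in \eqref{B14}, every bracketed term is nonnegative (using $\mu\ge0$, $\beta\ge0$, $\tau_k\in(0,1]$ and cocoercivity), so $\varTheta_k'(x_*)$ decreases up to the perturbation $\langle x_*-v_{k+1},g_k\rangle$; since $\sum_k|\langle v_{k+1}-x_*,g_k\rangle|<\infty$ by Lemma~\ref{L7}(2), Lemma~\ref{L3} shows that $\varTheta_k'(x_*)$ converges to some $\ell\ge0$, and summing \eqref{B14} delivers the summabilities in Lemma~\ref{L7}(3)--(6). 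I then decompose, using \eqref{B13},
\[
\varTheta_{k+1}'(x_*)=\tfrac{1}{\tau_k^p}\langle x_{k+1}-x_*,T_\lambda(x_{k+1})\rangle+\tfrac12\|v_{k+1}-x_*\|^2+\tfrac{\beta}{2}\|x_{k+1}-x_*\|^2,
\]
and aim to show that the first two summands converge, so that the positivity of $\beta$ isolates $\tfrac{\beta}{2}\|x_{k+1}-x_*\|^2$ and forces $\|x_k-x_*\|$ to converge.

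The technical heart is controlling these two auxiliary terms, and I expect this disentangling to be the delicate part. For the coupling term $\tfrac{1}{\tau_k^p}\langle x_{k+1}-x_*,T_\lambda(x_{k+1})\rangle$ I would use $\mu>0$: Lemma~\ref{L7}(3) combined with cocoercivity gives $\sum_k\tfrac{1}{\tau_k^{p-1}}\|T_\lambda(x_k)\|^2<\infty$, while Lemma~\ref{L7}(4) controls the increments of $\tfrac{1}{\tau_k^p}T_\lambda(x_{k+1})$; together with the boundedness of $\{x_k\}$ these are meant to drive the coupling term to $0$ (equivalently, to upgrade \eqref{B34} to the $o$-type decay, in the spirit of the little-$o$ rates of RIPA/PRINA/CRIPA). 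For the term $\tfrac12\|v_{k+1}-x_*\|^2$ I would exploit $\beta>0$, which via Lemma~\ref{L7}(6) yields the strong summability $\sum_k\tfrac1{\tau_k}\|x_{k+1}-x_k\|^2<\infty$, together with $\sum_k\|v_{k+1}-v_k\|^2<\infty$ from Lemma~\ref{L7}(5), to pin down the limiting behaviour of $\{v_k\}$. The difficulty is genuine because the momentum coefficient $\tfrac{1-\tau_k}{\tau_k}=ak$ is unbounded, so $v_k$ need not stay close to $x_k$ and the three terms in $\varTheta_{k+1}'(x_*)$ cannot be separated by a soft argument. Once convergence of $\|x_k-x_*\|$ is secured for every $x_*$, conditions (i) and (ii) of Lemma~\ref{L4} hold and the weak convergence of $\{x_k\}$ to a solution of \eqref{B} follows.
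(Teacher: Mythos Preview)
Your overall strategy---invoke Opial's Lemma~\ref{L4} and dispose of condition~(ii) via \eqref{B34} and Lemma~\ref{L6}---is exactly the paper's. The treatment of condition~(ii) is correct and matches.

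The gap is in condition~(i). You propose to split
\[
\varTheta_{k+1}'(x_*)=\underbrace{\tfrac{1}{\tau_k^p}\langle x_{k+1}-x_*,T_\lambda(x_{k+1})\rangle}_{(\mathrm{I})}+\underbrace{\tfrac12\|v_{k+1}-x_*\|^2}_{(\mathrm{II})}+\tfrac{\beta}{2}\|x_{k+1}-x_*\|^2
\]
and show that (I) and (II) converge separately. The ingredients you list do not deliver this. For (I), Lemma~\ref{L7}(3) with $\mu>0$ gives $\sum_k \tau_k^{1-p}\langle x_k-x_*,T_\lambda(x_k)\rangle<\infty$, which since $\tau_k\sim 1/(ak)$ is only $\sum_k k^{-1}\!\cdot\!(\mathrm{I})<\infty$; without a usable control on the increments of (I) (and Lemma~\ref{L7}(4) bounds $\|T_\lambda(x_{k+1})-T_\lambda(x_k)\|$, not the increments of (I)), this does not force $(\mathrm{I})\to0$. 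For (II), Lemma~\ref{L7}(5) yields only $\|v_{k+1}-v_k\|\to0$, which does not imply convergence of $\|v_k-x_*\|$. So the ``disentangling'' you flag as delicate is in fact the missing step, and your proposal does not close it.

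The paper avoids this separation entirely. It replaces $\tfrac12\|v_{k+1}-x_*\|^2$ by the identity \eqref{B47}/\eqref{B21}, so that $\varTheta_{k+1}'(x_*)$ is expressed through $\|x_{k+1}-x_*\|^2-\|x_k-x_*\|^2$, $\|x_{k+1}-x_k\|^2$, and the coupling term. Substituting into \eqref{B14} and adding $\tfrac{\beta}{\tau_k^{p-1}}\langle x_{k+1}-x_*,T_\lambda(x_{k+1})\rangle+\beta\tfrac{1-\tau_k}{2\tau_k}\|x_{k+1}-x_k\|^2$ to both sides produces a recursion of the shape $(t_{k+1}+\eta)\,b_{k+1}\le t_k\,b_k+d_k$ with $t_k=\tfrac{1-\tau_{k-1}}{\tau_{k-1}}$, $\eta=\beta+1$, and $b_k$ containing the increment $\tfrac12(\|x_k-x_*\|^2-\|x_{k-1}-x_*\|^2)$. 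Lemma~\ref{L5} then gives convergence of $\|x_k-x_*\|^2$ directly, provided $d_k$ is summable; the latter is exactly where $\mu>0$ (via Lemma~\ref{L7}(3)) and $\beta>0$ (via Lemma~\ref{L7}(6)), together with Lemma~\ref{L7}(2), are used. The key difference is that the paper never needs (I) or (II) to converge individually---it packages the increment of $\|x_k-x_*\|^2$ into $b_k$ and lets Lemma~\ref{L5} handle the unbounded momentum factor $t_k$.
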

\begin{proof}
The proof is based on Lemma \ref{L4}. 	
	We first prove the first condition in  Lemma \ref{L4},  which states that $\|x_k-x_*\|^2$ is convergent.
		By substituting the reformulation of $\varTheta'_{k+1}(x_*)$,  given by \eqref{B21}, into \eqref{B14} and ignoring the  nonnegative terms $	
		\frac{1-\tau_k}{\tau_k^{p+1}}\langle x_{k+1}-x_k, T_{\lambda}(x_{k+1})-T_{\lambda}(x_{k}) \rangle
		+\frac{1}{2}\|v_{k+1}-v_k\|^2+\frac{\mu}{\tau_k^{p-1}}\langle x_{k}-x_*, T_{\lambda}(x_{k})\rangle+\beta\frac{2-\tau_k}{2\tau_k}\|x_{k+1}-x_k\|^2
		 $, we can infer that
\begin{footnotesize}
		\begin{equation}\nonumber
		\begin{aligned}
		&\frac{1}{\tau_k^p}\langle x_{k+1}-x_*, T_{\lambda}(x_{k+1})\rangle+\frac{1-\tau_k}{2\tau_k^2}\|x_{k+1}-x_k\|^2+\frac{1+\beta\tau_k}{2\tau_k}(\|x_{k+1}-x_*\|^2-\|x_{k}-x_*\|^2)\\\leq&
		\frac{1}{\tau_{k-1}^p}\langle x_{k}-x_*, T_{\lambda}(x_{k})\rangle+\frac{1-\tau_{k-1}}{2\tau_{k-1}^2}\|x_{k}-x_{k-1}\|^2+\frac{1-\tau_{k-1}}{2\tau_{k-1}}(\|x_{k}-x_*\|^2-\|x_{k-1}-x_*\|^2)\\&+\vert\langle
		v_{k+1}-x_*, g_k
		\rangle\vert.
		\end{aligned}
		\end{equation}
\end{footnotesize}
		Adding $\frac{\beta}{\tau_k^{p-1}}\langle x_{k+1}-x_*, T_{\lambda}(x_{k+1})\rangle+\beta\frac{1-\tau_k}{2\tau_k}\|x_{k+1}-x_{k}\|^2$ to both sides of the above inequality, we have
\begin{footnotesize}
		\[
		\begin{aligned}
		&\frac{1+\beta\tau_k}{\tau_k} (\frac{1}{\tau_k^{p-1}}\langle x_{k+1}-x_*, T_{\lambda}(x_{k+1})\rangle+\frac{1-\tau_k}{2\tau_k}\|x_{k+1}-x_k\|^2+\frac{1}{2}(\|x_{k+1}-x_*\|^2-\|x_{k}-x_*\|^2) )
\\ &\leq
		\frac{1}{\tau_{k-1}^p}\langle x_{k}-x_*, T_{\lambda}(x_{k})\rangle+\frac{1-\tau_{k-1}}{2\tau_{k-1}^2}\|x_{k}-x_{k-1}\|^2+\frac{1-\tau_{k-1}}{2\tau_{k-1}}(\|x_{k}-x_*\|^2-\|x_{k-1}-x_*\|^2)\\&~~~~+\vert\langle
		v_{k+1}-x_*, g_k
		\rangle\vert+\frac{\beta}{\tau_k^{p-1}}\langle x_{k+1}-x_*, T_{\lambda}(x_{k+1})\rangle+\beta\frac{1-\tau_k}{2\tau_k}\|x_{k+1}-x_{k}\|^2\\&=\frac{1-\tau_{k-1}}{\tau_{k-1}} (
		\frac{1}{\tau_{k-1}^{p-1}}\langle x_{k}-x_*, T_{\lambda}(x_{k})\rangle\!+\!\frac{1-\tau_{k-1}}{2\tau_{k-1}}\|x_{k}-x_{k-1}\|^2\!+\!\frac{1}{2}(\|x_{k}-x_*\|^2\!-\!\|x_{k-1}-x_*\|^2) )\\&~~~~+\vert\langle
		v_{k+1}-x_*, g_k
		\rangle\vert+\frac{\beta}{\tau_k^{p-1}}\langle x_{k+1}-x_*, T_{\lambda}(x_{k+1})\rangle+\beta\frac{1-\tau_k}{2\tau_k}\|x_{k+1}-x_{k}\|^2\\&~~~~+	\frac{1}{\tau_{k-1}^{p-1}}\langle x_{k}-x_*, T_{\lambda}(x_{k})\rangle+\frac{1-\tau_{k-1}}{2\tau_{k-1}}\|x_{k}-x_{k-1}\|^2.
		\end{aligned}
		\]
\end{footnotesize}
	According to Lemma \ref{L5} presented in the Appendix, by	setting \begin{equation*}
			\begin{aligned}
			&a_k:=\frac{1}{2}\|x_{k}-x_*\|^2,~ t_k:=\frac{1-\tau_{k-1}}{\tau_{k-1}},~ \eta:=\beta+1,\\&
			b_k:=\!	\frac{1}{\tau_{k-1}^{p-1}}\langle x_{k}\!-\!x_*, T_{\lambda}(x_{k})\rangle\!+\!\frac{1\!-\!\tau_{k-1}}{2\tau_{k-1}}\|x_{k}\!-\!x_{k-1}\|^2\!+\!\frac{1}{2}(\|x_{k}-x_*\|^2\!-\!\|x_{k-1}\!-\!x_*\|^2),\\&
			d_k:=\vert\langle
			v_{k+1}-x_*, g_k
			\rangle\vert+\frac{\beta}{\tau_k^{p-1}}\langle x_{k+1}-x_*, T_{\lambda}(x_{k+1})\rangle+\beta\frac{1-\tau_k}{2\tau_k}\|x_{k+1}-x_{k}\|^2\\&~~~~~~~+	\frac{1}{\tau_{k-1}^{p-1}}\langle x_{k}-x_*, T_{\lambda}(x_{k})\rangle+\frac{1-\tau_{k-1}}{2\tau_{k-1}}\|x_{k}-x_{k-1}\|^2,
			\end{aligned}
		\end{equation*}
we can verify that the first and the second inequalities in \eqref{A7} hold.
		Since $\tau_k=\frac{1}{ak+1}$ with $a>0$ and $ap\leq 1-\mu$, we have
		\begin{equation}\label{B53}
		\begin{aligned}
	&	\sum_{k=1}^{+\infty}	\frac{1}{\tau_{k-1}^{p-1}}\langle x_{k}-x_*, T_{\lambda}(x_{k})\rangle=	\sum_{k=1}^{+\infty}	\big(a(k-1)+1\big)^{p-1}\langle x_{k}-x_*, T_{\lambda}(x_{k})\rangle\\\leq&	\sum_{k=1}^{+\infty}	(ak+1)^{p-1}\langle x_{k}-x_*, T_{\lambda}(x_{k})\rangle=	\sum_{k=1}^{+\infty}	\frac{1}{\tau_{k}^{p-1}}\langle x_{k}-x_*, T_{\lambda}(x_{k})\rangle\overset{\text{Lemma}~ \ref{L7} ~	\textnormal{({3})}}{<}+\infty.
		\end{aligned}
		\end{equation}
Then, it follows from \eqref{B53}, conclusions \textnormal{({2})}, \textnormal{({3})}, and \textnormal{({6})} in \text{Lemma}~\ref{L7} that $d_k$ is summable. 	By using Lemma \ref{L5}, we can conclude that
 $\|x_k-x_*\|^2$ is convergent.
	
Next, we will prove the second condition in Lemma \ref{L4}.  Since  $\|T_{\lambda}(x_{k+1})\|\leq O(1/k^p)$, we have $\lim\limits_{k\rightarrow+\infty}\|T_{\lambda}(x_k)\|=0$. Therefore, every weak cluster point of $x_k$ belongs to a solution of \eqref{B} based on Lemma \ref{L6}.
\end{proof}

\section{Guaranteed linear convergence Contracting PPA}
In this section, we will focus on the linear convergence rate of the Contracting PPA with a guaranteed convergence. Instead of directly considering the convergence of the Contracting PPA, we will consider the following new method called {\bf Linear Contracting PPA}.

\begin{framed}
	\noindent{\bf Linear Contracting PPA.}
	
	\noindent
Given $\tau\in(0,1)$,  $x_{-1}=x_0\in\mathcal{H}$, for $k=0,1,\cdots$, set
	\begin{equation}\label{v:1}
	v_k=\frac{1}{\tau}x_k-\frac{1-\tau}{\tau}x_{k-1}.
	\end{equation}
Given  $\lambda>0$, $\eta\in(0,1]$, $\beta\geq0$,  $\delta_k\ge0$ with $\sum_{k\geq0}\delta_k<+\infty$, find $x_{k+1}$ such that
	\begin{equation}\label{B22} 0=\frac{1}{(1\!-\!\eta\tau)^{k+1}}T_{\lambda}(x_{k+1})-\frac{1\!-\!\tau}{(1\!-\!\eta\tau)^{k+1}}T_{\lambda}(x_{k})+(v_{k+1}-v_k)+\beta(x_{k+1}-x_k)+g_k,
	\end{equation}
 where $g_k\in \mathcal{H}$  satisfies  that $\|g_k\|\leq\delta_k$.
\end{framed}

\begin{rem}
{If we
set $A_{k}$ as in \eqref{B8v:0} with $A_0=1$ and $\frac{1}{\tau}:=\frac{A_{k}}{a_{k}}$, then $\frac{1-\tau}{\tau}=\frac{A_{k-1}}{a_{k}}$ and hence
	\begin{equation*}
		A_{k}=\frac{1}{\tau}a_{k} =\frac{1}{1-\tau}A_{k-1}=\frac{1}{(1-\tau)^{k}}A_0=\frac{1}{(1-\tau)^{k}}.
	\end{equation*} }
Then the Linear Contracting PPA with $\eta=1$ and $\beta=0$ is equivalent to the Contracting PPA. To establish the weak convergence of the Linear Contracting PPA, we further require
 $\eta\in(0,1)$ and $\beta>0$.
\end{rem}

In the convergence rate and weak convergence analysis of the Linear Contracting PPA, we consider the following  potential function
\begin{equation}\label{B23}
\varTheta_{k+1}''(x):=\frac{1}{(1-\eta\tau)^{k+1}}\langle x_{k+1}-x, T_{\lambda}(x_{k+1})\rangle+\frac{1}{2}\|v_{k+1}-x\|^2+\frac{\beta}{2}\|x_{k+1}-x\|^2,
\end{equation}
which plays a crucial role.

\begin{lemma}\label{L9}
	Suppose $x_*\in T^{-1}(0)$. The iterative sequence $\{x_k\}$ generated by the Linear Contracting PPA satisfies that
	\begin{equation}\label{B24}
	\begin{aligned}
	&\varTheta_{k+1}''(x_*)\\
=& \varTheta_{k}''(x_*)-
	\left(
	\frac{1-\tau}{\tau(1-\eta\tau)^{k+1}}\langle x_{k+1}-x_k, T_{\lambda}(x_{k+1})-T_{\lambda}(x_{k}) \rangle
	+\frac{1}{2}\|v_{k+1}-v_k\|^2\right.\\ &\left.+ 	\frac{\tau(1-\eta)}{(1-\eta\tau)^{k+1}}\langle x_{k}-x_*, T_{\lambda}(x_{k})\rangle+\beta\frac{2-\tau}{2\tau}\|x_{k+1}-x_k\|^2+\langle
	x_*-v_{k+1}, g_k
	\rangle
	\right).
	\end{aligned}
	\end{equation}
	In addition,
	\begin{itemize}
			\item[\textnormal{({1})}] $\{v_k\}$ is bounded.
	\item[\textnormal{({2})}]$\sum_{k\geq0}\vert\langle
	v_{k+1}-x_*, g_k
	\rangle\vert<+\infty$.
		\item[\textnormal{({3})}]$
\label{B61}	\sum_{k\geq0}\frac{\tau(1-\eta)}{(1-\eta\tau)^{k+1}}\langle x_{k}-x_*, T_{\lambda}(x_{k})\rangle<+\infty.$
	\item[\textnormal{({4})}]$
	\sum_{k\geq0}\frac{1-\tau}{\tau(1-\eta\tau)^{k+1}}\| T_{\lambda}(x_{k+1})-T_{\lambda}(x_{k})\|^2<+\infty$.
		\item[\textnormal{({5})}]
$	\sum_{k\geq0} \|v_{k+1}-v_k\|^2<+\infty.$
	\item[\textnormal{({6})}]
$	\sum_{k\geq0}  	\beta\frac{2-\tau}{2\tau}\|x_{k+1}-x_k\|^2	<+\infty.$
	\end{itemize}
\end{lemma}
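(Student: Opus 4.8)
The plan is to follow the template of Lemma~\ref{LC} (and its sublinear analogue Lemma~\ref{L7}), adapting the weighted bookkeeping to the geometric weights, which I abbreviate $\gamma_k:=(1-\eta\tau)^{-(k+1)}$, and to the additional inertial term $\beta(x_{k+1}-x_k)$. First I would record the two identities that follow directly from the definition~\eqref{v:1},
\[
x_{k+1}-v_{k+1}=-\tfrac{1-\tau}{\tau}(x_{k+1}-x_k),\qquad x_k-v_{k+1}=-\tfrac1\tau(x_{k+1}-x_k),
\]
which allow me to expand $x-v_{k+1}$ against each summand of~\eqref{B22}. Pairing $x-v_{k+1}$ with the right-hand side of~\eqref{B22} (which vanishes) splits into four contributions: the two $T_\lambda$ terms, the term $\langle x-v_{k+1},v_{k+1}-v_k\rangle=-\tfrac12(\|v_{k+1}-x\|^2-\|v_k-x\|^2+\|v_{k+1}-v_k\|^2)$ from the polarization identity, and the new $\beta$ term.

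The decisive step is to match the weight multiplying $T_\lambda(x_k)$ with the one appearing in $\varTheta_k''(x)$. Because $\gamma_{k-1}=(1-\eta\tau)\gamma_k$, one has $(1-\tau)\gamma_k-\gamma_{k-1}=-\tau(1-\eta)\gamma_k$, hence
\[
(1-\tau)\gamma_k\langle x_k-x,T_\lambda(x_k)\rangle=\gamma_{k-1}\langle x_k-x,T_\lambda(x_k)\rangle-\tau(1-\eta)\gamma_k\langle x_k-x,T_\lambda(x_k)\rangle,
\]
which is exactly how the dissipation term $\tfrac{\tau(1-\eta)}{(1-\eta\tau)^{k+1}}\langle x_k-x,T_\lambda(x_k)\rangle$ is produced. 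The inertial term expands, again by polarization, into $-\tfrac\beta2\|x_{k+1}-x\|^2+\tfrac\beta2\|x_k-x\|^2$, feeding the squared-norm parts of $-\varTheta_{k+1}''$ and $\varTheta_k''$, plus the residual $-(\tfrac\beta2+\beta\tfrac{1-\tau}{\tau})\|x_{k+1}-x_k\|^2=-\beta\tfrac{2-\tau}{2\tau}\|x_{k+1}-x_k\|^2$. Assembling the $-\varTheta_{k+1}''(x)$ and $+\varTheta_k''(x)$ blocks and fusing the two cross terms into $-\tfrac{1-\tau}{\tau}\gamma_k\langle x_{k+1}-x_k,T_\lambda(x_{k+1})-T_\lambda(x_k)\rangle$ yields~\eqref{B24}.

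For conclusions (1)--(6) I would set $x=x_*$, so $T_\lambda(x_*)=0$, and observe that every term inside the parenthesis of~\eqref{B24} is nonnegative: the $T_\lambda$ cross term by $\lambda$-cocoercivity, the term $\tfrac{\tau(1-\eta)}{(1-\eta\tau)^{k+1}}\langle x_k-x_*,T_\lambda(x_k)\rangle$ by monotonicity together with $\eta\le1$, and the two squared-norm terms trivially (using $\tau\in(0,1)$, $\beta\ge0$). Thus $\varTheta_{k+1}''(x_*)$ decreases up to the error $\pm\langle x_*-v_{k+1},g_k\rangle$, which I control by $\|v_{k+1}-x_*\|\delta_k$. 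Telescoping and using $\varTheta_{k+1}''(x_*)\ge\tfrac12\|v_{k+1}-x_*\|^2$ gives $\tfrac12\|v_{k+1}-x_*\|^2\le\varTheta_0''(x_*)+\sum_{i=0}^k\|v_{i+1}-x_*\|\delta_i$; Lemma~\ref{L1} with $a_k=\tfrac1{\sqrt2}\|v_{k+1}-x_*\|$, $b_k=\sqrt2\delta_k$, $c=\sqrt{\varTheta_0''(x_*)}$ then gives boundedness of $\{v_k\}$, i.e.\ (1). Conclusion (2) follows since $|\langle v_{k+1}-x_*,g_k\rangle|\le\|v_{k+1}-x_*\|\delta_k$ is summable; once (1)--(2) make the right-hand side of the telescoped inequality finite, conclusions (3)--(6) are obtained by reading off the summability of each separate nonnegative term.

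The main obstacle is the weighted bookkeeping rather than any deep estimate: one must carry the factors $(1-\eta\tau)^{-(k+1)}$ through the expansion so that $(1-\tau)\gamma_k-\gamma_{k-1}$ collapses precisely to $-\tau(1-\eta)\gamma_k$, and simultaneously check that the inertial contribution assembles into the stated $\tfrac{2-\tau}{2\tau}$ coefficient. A minor point is that the sign finally carried by the $g_k$ error is immaterial, since it is absorbed by the absolute-value bound $\|v_{k+1}-x_*\|\delta_k$, leaving the summability conclusions intact.
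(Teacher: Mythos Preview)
Your proposal is correct and follows essentially the same route as the paper: expand $\langle x_*-v_{k+1},\cdot\rangle$ against each summand of~\eqref{B22}, use the exact identity $(1-\tau)\gamma_k=\gamma_{k-1}-\tau(1-\eta)\gamma_k$ to split off the extra dissipation term, combine via polarization to recognize $-\varTheta_{k+1}''(x_*)+\varTheta_k''(x_*)$, and then derive (1)--(6) exactly as in Lemma~\ref{LC} via Lemma~\ref{L1}. The only cosmetic difference is that the paper specializes to $x=x_*$ from the outset rather than carrying a general $x$ through the expansion.
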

 We give the proof in Appendic \ref{Apc} for better reading. The following theorem clarifies the linear convergence rate.

\begin{theorem}
	If $x_*\in T^{-1}(0)$,  the iterative sequence $\{x_k\}$ generated by the Linear Contracting PPA  satisfies that
	\begin{eqnarray}
	\label{B56}	\|T_{\lambda}(x_{k+1})\|\leq O\big((1-\eta\tau)^{k+1}\big);\\
\label{B57}\mathtt{Gap}(x_{k+1})\leq O\big((1-\eta\tau)^{k+1}\big).
	\end{eqnarray}
\end{theorem}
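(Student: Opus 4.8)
The plan is to transcribe the argument already used for the Sublinear Contracting PPA, the only structural change being that the geometric weight $\frac{1}{(1-\eta\tau)^{k+1}}$ now plays the role of $\frac{1}{\tau_k^p}$ and, crucially, collapses the previously $k$-dependent contraction factor into a single constant. As a preliminary remark, since $x_*\in T^{-1}(0)$ we have $T_{\lambda}(x_*)=0$, so that by monotonicity $\langle x_{k+1}-x_*,T_{\lambda}(x_{k+1})\rangle\ge0$ and $\langle x_k-x_*,T_{\lambda}(x_k)\rangle\ge0$; these sign facts will be used repeatedly.

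First I would establish that $\{x_k\}$ is bounded. Expanding $\|v_{k+1}-x_*\|^2$ via \eqref{v:1} gives $\|v_{k+1}-x_*\|^2=\frac{1}{\tau}\|x_{k+1}-x_*\|^2-\frac{1-\tau}{\tau}\|x_k-x_*\|^2+\frac{1-\tau}{\tau^2}\|x_{k+1}-x_k\|^2$, which, substituted into the definition \eqref{B23} of $\varTheta_{k+1}''(x_*)$, produces an expression analogous to the sublinear reformulation \eqref{B21}. In particular $\frac{1}{2\tau}\|x_{k+1}-x_*\|^2-\frac{1-\tau}{2\tau}\|x_k-x_*\|^2\le\tfrac12\|v_{k+1}-x_*\|^2\le\varTheta_{k+1}''(x_*)$. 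I would then invoke the descent identity \eqref{B24} and drop its four nonnegative terms: the cocoercivity term by monotonicity of $T_{\lambda}$; the term $\frac{\tau(1-\eta)}{(1-\eta\tau)^{k+1}}\langle x_k-x_*,T_{\lambda}(x_k)\rangle$ because $\eta\le1$ and $\langle x_k-x_*,T_{\lambda}(x_k)\rangle\ge0$; and the remaining two because $\tau<1$ (so $2-\tau>0$) and $\beta\ge0$. Telescoping $\varTheta_{k+1}''(x_*)\le\varTheta_0''(x_*)-\sum_{i=0}^{k}\langle x_*-v_{i+1},g_i\rangle$ and using the summability in Lemma \ref{L9}(2) yields $\frac12\|x_{k+1}-x_*\|^2\le\frac{1-\tau}{2}\|x_k-x_*\|^2+\tau c$ for a constant $c$, so Lemma \ref{L2} with $a_k=\frac12\|x_k-x_*\|^2$ and $\eta_k\equiv\tau$ forces $\{x_k\}$ to be bounded.

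Next I would rearrange \eqref{B22} into a telescoping identity. Writing $h_k:=\frac{1}{(1-\eta\tau)^{k}}-\frac{1-\tau}{(1-\eta\tau)^{k+1}}$, a one-line computation gives $h_k=\frac{\tau(1-\eta)}{(1-\eta\tau)^{k+1}}$, which is exactly the coefficient appearing in Lemma \ref{L9}(3). Setting $a_k':=h_kT_{\lambda}(x_k)$, $s_k':=\sum_{i=1}^{k}a_i'$, and summing the rearranged iteration from $k=1$ to $t$ produces $\frac{1}{(1-\eta\tau)^{t+1}}T_{\lambda}(x_{t+1})+s_t'=b_t'$, where $b_t':=(v_1-v_{t+1})+\beta(x_1-x_{t+1})+\frac{1}{1-\eta\tau}T_{\lambda}(x_1)-\sum_{k=1}^{t}g_k$. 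The decisive observation is that $h_{t+1}(1-\eta\tau)^{t+1}=\frac{\tau(1-\eta)}{1-\eta\tau}=:\rho$ is a \emph{constant} lying in $[0,1)$ — indeed $\rho\ge0$ since $\eta\le1$, and $\rho<1\iff\tau<1$ — in sharp contrast to the sublinear case where this factor varied with $k$. Consequently $\frac{1}{(1-\eta\tau)^{t+1}}T_{\lambda}(x_{t+1})=\frac{1}{\rho}(s_{t+1}'-s_t')$, and the recursion reduces to the clean geometric contraction $s_{t+1}'=(1-\rho)s_t'+\rho b_t'$.

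Since $\{x_t\}$ is bounded, $\{v_t\}$ is bounded by Lemma \ref{L9}(1), and $\sum_k\|g_k\|\le\sum_k\delta_k<+\infty$, the sequence $\{b_t'\}$ is bounded; the constant-factor contraction then forces $\{s_t'\}$ to be bounded (applying Lemma \ref{L2} with $\eta_k\equiv\rho$, exactly as in the sublinear proof). From $\frac{1}{(1-\eta\tau)^{t+1}}\|T_{\lambda}(x_{t+1})\|=\|b_t'-s_t'\|\le\|b_t'\|+\|s_t'\|<+\infty$ I obtain \eqref{B56}. Finally \eqref{B57} follows by monotonicity: for any $x\in\mathbb{B}(x_0,D)$ one has $\langle x_{k+1}-x,T_{\lambda}(x)\rangle\le\langle x_{k+1}-x,T_{\lambda}(x_{k+1})\rangle\le\|x_{k+1}-x\|\,\|T_{\lambda}(x_{k+1})\|$, so the boundedness of $\{x_k\}$ over $\mathbb{B}(x_0,D)$ together with \eqref{B56} and the definition of $\mathtt{Gap}$ delivers the linear rate. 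The only genuinely delicate point is the telescoping bookkeeping that reveals $\rho$ to be constant and the verification that $\{b_t'\}$ is bounded; every other step is a direct adaptation of the sublinear argument.
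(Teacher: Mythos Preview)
Your proposal is correct and follows essentially the same route as the paper's own proof: the boundedness of $\{x_k\}$ via the reformulation of $\varTheta_{k+1}''(x_*)$ and Lemma~\ref{L2}, the telescoping of \eqref{B22} to isolate the constant contraction factor (your $\rho$ is the paper's $\theta=\frac{\tau(1-\eta)}{1-\eta\tau}$), the boundedness of $\{s_t'\}$ again via Lemma~\ref{L2}, and the passage to $\mathtt{Gap}$ by monotonicity are all exactly as in the paper. The only cosmetic difference is that you retain the $h_k$ notation from the sublinear proof before observing it collapses to a constant, whereas the paper introduces $\theta$ directly.
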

\begin{proof}
	First, we will prove that $\{x_k\}$ is bounded.	According to the definition of $v_{k+1}$, we have
	\begin{equation}\label{B54}
	\begin{aligned}
	&\|v_{k+1}-x_*\|^2
	=\frac{1-\tau}{\tau^2}\|x_{k+1}-x_k\|^2+\frac{1}{\tau}\|x_{k+1}-x_*\|^2-\frac{1-\tau}{\tau}\|x_{k}-x_*\|^2.
	\end{aligned}
	\end{equation}
	Then by using the above equality and  the definition of $\varTheta_{k+1}''(x)$ in \eqref{B23} we have \begin{equation}\label{B29}
	\begin{aligned}
	\varTheta_{k+1}''(x_*)=&\frac{1}{(1-\eta\tau)^{k+1}}\langle x_{k+1}-x_*, T_{\lambda}(x_{k+1})\rangle+\frac{1-\tau}{2\tau^2}\|x_{k+1}-x_k\|^2\\&+\frac{1+\beta\tau}{2\tau}\|x_{k+1}-x_*\|^2-\frac{1-\tau}{2\tau}\|x_{k}-x_*\|^2.
	\end{aligned}
	\end{equation}
Consequently, we have
\[
	\begin{aligned}	&\frac{1}{2\tau}\|x_{k+1}-x_*\|^2-\frac{1-\tau}{2\tau}\|x_{k}-x_*\|^2\overset{\eqref{B54}}{\leq}\frac{1}{2}\|v_{k+1}-x_*\|^2
\overset{\eqref{B23}}{\leq} \varTheta_{k+	1}''(x_*)\\ \overset{\eqref{B24}}{\leq}&\varTheta_{k}''(x_*)-\langle
	x_*-v_{k+1}, g_k
	\rangle
\leq\varTheta_{0}'(x_*)-\sum_{i=0}^{k}\langle
	x_*-v_{i+1}, g_i
	\rangle <c,
	\end{aligned}
\]	
where $c$ is a positive constant and the last inequality follows from the second conclusion of in Lemma \ref{L9}.
Moreover, we can rewrite the above inequality as
\begin{equation*}
\frac{1}{2}\|x_{k+1}-x_*\|^2\leq\frac{1-\tau}{2}\|x_{k}-x_*\|^2+
\tau c.
\end{equation*}
By utilizing Lemma \ref{L2} with $a_k=	\frac{1}{2}\|x_{k}-x_*\|^2$ and $\eta_k=\tau$,
we conclude that $ \|x_{k}-x_*\|^2$ bounded.  As a result, $\{x_k\}$  is also bounded.
	
By rearranging the iteration  \eqref{B22} in the Sublinear Contracting PPA  as
	\begin{equation*}
	\begin{aligned} &\frac{1}{(1-\eta\tau)^{k+1}}T_{\lambda}(x_{k+1})-\frac{1}{(1-\eta\tau)^{k}}T_{\lambda}(x_{k})+\frac{\tau(1-\eta)}{(1-\eta\tau)^{k+1}}T_{\lambda}(x_{k})\\
=&(v_{k}-v_{k+1})+\beta(x_{k}-x_{k+1})-g_k,
	\end{aligned}
	\end{equation*}
and then summing them from $k=1$ to $k=t$, we obtain
	\begin{equation}\label{B30}
	\begin{aligned} &\frac{1}{(1-\eta\tau)^{t+1}}T_{\lambda}(x_{t+1})-\frac{1}{(1-\eta\tau)^{1}}T_{\lambda}(x_{1})+\sum_{k=1}^{t}\frac{\tau(1-\eta)}{(1-\eta\tau)^{k+1}}T_{\lambda}(x_{k})\\
=&(v_{1}-v_{t+1})+\beta(x_{1}-x_{t+1})-\sum_{k=1}^{t}g_k.
	\end{aligned}
	\end{equation}
	For convenience, let us define
	$$
\theta:=\frac{\tau(1-\eta)}{1-\eta\tau}\in(0,1),~ 	a_k'':=\frac{\tau(1-\eta)}{(1-\eta\tau)^{k+1}}T_{\lambda}(x_{k}),~{\rm and}~s_k'':=\sum_{i=1}^{k}a_i''.
	$$
	Then, \eqref{B30} can be rewritten as
	\begin{equation}\label{B31}
	\begin{aligned}
	b_t'':&=(v_{1}-v_{t+1})+\beta(x_{1}-x_{t+1})+\frac{1}{1-\eta\tau }T_{\lambda}(x_{1})-\sum_{k=1}^{t}g_k\\&=	\frac{1}{\theta}a_{t+1}''+s_t''
	=\frac{1}{\theta}(s_{t+1}''-s_t'')+s_t''
	.
	\end{aligned}
	\end{equation}	
Due to the boundedness of $\{x_t\}$, $\{v_t\}$,  and the summablity of $\|g_k\|$,  $\{b_t''\}$ is also bounded. Together with \eqref{B31}, we can find $c>0$ such that
$$
	s_{t+1}''\leq(1-\theta)s_t''+\theta b_t''\leq(1-\theta)s_t''+\theta c.
$$
By using Lemma \ref{L2} with $a_k=	s_k''$ and $\eta_k=\theta$, we can conclude that $\{s_k''\}$ is bounded.
Additionally, noting that
	$$
\frac{1}{(1-\eta\tau)^{t+1}}\|T_{\lambda}(x_{t+1})\|	=\frac{1}{\theta}\|a_{t+1}''\|\overset{\eqref{B31}}{=}\|b_t''-s_t''\|<+\infty,
	$$ we can prove \eqref{B56}.
Then according to
\begin{equation}\nonumber
\begin{aligned}
&\langle x_{k+1}-x, T_{\lambda}(x)\rangle    \leq  \langle x_{k+1}-x, T_{\lambda}(x_{k+1})\rangle \leq\|x_{k+1}-x\| \|T_{\lambda}(x_{k+1})\|,
\end{aligned}
\end{equation}
\eqref{B56}, the boundness of $x_k$	 and the definition of $\mathtt{Gap}(x_{k+1})$, we can derive \eqref{B57}.The proof is complete.
\end{proof}

The following theorem establishes the weak convergence of the Linear Contracting PPA under the conditions  $\eta\in(0,1)$  and $\beta>0$.

\begin{theorem}
	Let $\{x_k\}$ be the sequence generated by the Linear Contracting PPA  with $\eta\in(0,1)$  and $\beta>0$.  The sequence $\{x_k\}$ converges  to a solution  of \eqref{B}.
\end{theorem}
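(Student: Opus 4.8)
The plan is to invoke Lemma \ref{L4}, whose two hypotheses are that $\|x_k-x_*\|^2$ converges for every $x_*\in T^{-1}(0)$, and that every weak sequential cluster point of $\{x_k\}$ solves \eqref{B}. The second hypothesis is immediate from the rate already established: \eqref{B56} gives $\|T_{\lambda}(x_{k+1})\|\leq O((1-\eta\tau)^{k+1})$, so $\lim_{k\to\infty}\|T_{\lambda}(x_k)\|=0$, and Lemma \ref{L6} then forces every weak cluster point of $\{x_k\}$ to be a solution of \eqref{B}. All the work therefore goes into the Fej\'er-type statement, the convergence of $\|x_k-x_*\|^2$.

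For that I would follow the template of the Sublinear Contracting PPA proof almost verbatim. Starting from the descent identity \eqref{B24} and the reformulation \eqref{B29} of $\varTheta_{k+1}''(x_*)$, substitute \eqref{B29} into \eqref{B24} and discard the nonnegative terms: the cocoercivity residual $\frac{1-\tau}{\tau(1-\eta\tau)^{k+1}}\langle x_{k+1}-x_k,T_{\lambda}(x_{k+1})-T_{\lambda}(x_k)\rangle$, the squares $\frac12\|v_{k+1}-v_k\|^2$ and $\beta\frac{2-\tau}{2\tau}\|x_{k+1}-x_k\|^2$, and the term $\frac{\tau(1-\eta)}{(1-\eta\tau)^{k+1}}\langle x_k-x_*,T_{\lambda}(x_k)\rangle$, which is nonnegative since $T_{\lambda}(x_*)=0$ and $T_{\lambda}$ is $\lambda$-cocoercive. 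Then add the $\beta$-correction term analogous to the one in the sublinear proof, namely a combination of $\frac{\beta}{(1-\eta\tau)^{k+1}}\langle x_{k+1}-x_*,T_{\lambda}(x_{k+1})\rangle$ and $\beta\frac{1-\tau}{2\tau}\|x_{k+1}-x_k\|^2$, to both sides, so that both sides acquire the self-similar bracketed form and the recursion matches Lemma \ref{L5}, now with the constant multiplier $t_k\equiv\frac{1-\tau}{\tau}$ (constant because $\tau$ is fixed here, in contrast to the $k$-dependent $t_k$ of the sublinear case) and the same parameter $\beta+1$. Setting $a_k:=\frac12\|x_k-x_*\|^2$ and reading off $b_k,d_k$ from this recursion, the two inequalities in \eqref{A7} hold by construction.

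The \emph{main obstacle} is then the summability $\sum_k d_k<+\infty$. The terms making up $d_k$ are $|\langle v_{k+1}-x_*,g_k\rangle|$, the $\beta$-weighted squares, and residual contributions of the form $\frac{1}{(1-\eta\tau)^{k}}\langle x_k-x_*,T_{\lambda}(x_k)\rangle$. The first is summable by Lemma \ref{L9}(2), the squares by Lemma \ref{L9}(6) (which is where $\beta>0$ is used), and the residual contributions by the estimate analogous to \eqref{B53}: since $(1-\eta\tau)^{-k}=(1-\eta\tau)\,(1-\eta\tau)^{-(k+1)}$, each such term is a fixed constant multiple $\frac{1-\eta\tau}{\tau(1-\eta)}$ of the summand of Lemma \ref{L9}(3), and that constant is finite precisely because $\eta\in(0,1)$ makes the weight $\tau(1-\eta)$ strictly positive. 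Hence $\sum_k d_k<+\infty$, and Lemma \ref{L5} yields the convergence of $\|x_k-x_*\|^2$. This verifies the first hypothesis of Lemma \ref{L4} and, together with the cluster-point argument above, completes the proof of weak convergence.
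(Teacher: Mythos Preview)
Your proposal is correct and follows the paper's own proof essentially verbatim: both verify Lemma~\ref{L4} by deriving the second hypothesis from \eqref{B56} and Lemma~\ref{L6}, and the first by substituting \eqref{B29} into \eqref{B24}, dropping the nonnegative terms, adding a correction to both sides, and invoking Lemma~\ref{L5} with $t_k\equiv\frac{1-\tau}{\tau}$, $\eta=\beta+1$, and summability of $d_k$ coming from Lemma~\ref{L9}(2),(3),(6). The only adjustment needed is the exact coefficient of the correction term: to factor the common prefactor $\frac{1+\beta\tau}{\tau}$ on the left-hand side (as required to match the $b_{k+1}$ of Lemma~\ref{L5}), the paper adds $\frac{1+\beta\tau-\tau}{\tau(1-\eta\tau)^{k+1}}\langle x_{k+1}-x_*,T_\lambda(x_{k+1})\rangle$ rather than your $\frac{\beta}{(1-\eta\tau)^{k+1}}\langle\cdot,\cdot\rangle$, but this is pure bookkeeping and does not affect the argument.
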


\begin{proof}
	We will prove this theorem using Lemma \ref{L4}. Firstly, we will prove the first condition in Lemma \ref{L4}, which states that $\frac{1}{2}\|x_k-x_*\|^2$ is convergent.
	According to the  equivalent form of $\varTheta''_{k+1}(x_*)$ in \eqref{B29} and the relation \eqref{B24},  by ignoring the  nonnegative terms $	\frac{1-\tau}{\tau(1-\eta\tau)^{k+1}}\langle x_{k+1}-x_k, T_{\lambda}(x_{k+1})-T_{\lambda}(x_{k}) \rangle
	+\frac{1}{2}\|v_{k+1}-v_k\|^2+	\frac{\tau(1-\eta)}{(1-\eta\tau)^{k+1}}\langle x_{k}-x_*, T_{\lambda}(x_{k})\rangle+\beta\frac{2-\tau}{2\tau}\|x_{k+1}-x_k\|^2$ in \eqref{B24}, we can infer that
\[
	\begin{aligned}
	&\frac{1}{(1-\eta\tau)^{k+1}}\langle x_{k+1}-x_*, T_{\lambda}(x_{k+1})\rangle+\frac{1-\tau}{2\tau^2}\|x_{k+1}-x_k\|^2\\
&+\frac{1+\beta\tau}{2\tau}(\|x_{k+1}-x_*\|^2-\|x_{k}-x_*\|^2)\\
\leq&
	\frac{1}{(1-\eta\tau)^{k}}\langle x_{k}-x_*, T_{\lambda}(x_{k})\rangle+\frac{1-\tau}{2\tau^2}\|x_{k}-x_{k-1}\|^2\\
&+\frac{1-\tau}{2\tau}(\|x_{k}-x_*\|^2-\|x_{k-1}-x_*\|^2)
+\vert\langle
	x_*-v_{k+1}, g_k
	\rangle\vert.
	\end{aligned}
\]
	Adding $\frac{1+\beta\tau-\tau}{\tau(1-\eta\tau)^{k+1}}\langle x_{k+1}-x_*, T_{\lambda}(x_{k+1})\rangle+\beta\frac{1-\tau}{2\tau}\|x_{k+1}-x_{k}\|^2$ to both sides of the above inequality, we have
	\begin{footnotesize}
	\[
	\begin{aligned}
	&\frac{1+\beta\tau}{\tau}\Big(\frac{1}{(1-\eta\tau)^{k+1}}\langle x_{k+1}-x_*, T_{\lambda}(x_{k+1})\rangle+\frac{1-\tau}{2\tau}\|x_{k+1}-x_k\|^2\\
&+\frac{1}{2}(\|x_{k+1}-x_*\|^2-\|x_{k}-x_*\|^2)\Big)\\
\leq&
	\frac{1}{(1-\eta\tau)^{k}}\langle x_{k}-x_*, T_{\lambda}(x_{k})\rangle+\frac{1-\tau}{2\tau^2}\|x_{k}-x_{k-1}\|^2+\frac{1-\tau}{2\tau}(\|x_{k}-x_*\|^2-\|x_{k-1}-x_*\|^2)\\&+\vert\langle
	v_{k+1}-x_*, g_k
	\rangle\vert+\frac{1+\beta\tau-\tau}{\tau(1-\eta\tau)^{k+1}}\langle x_{k+1}-x_*, T_{\lambda}(x_{k+1})\rangle+\beta\frac{1-\tau}{2\tau}\|x_{k+1}-x_{k}\|^2\\=&\frac{1-\tau}{\tau}\Big(
\frac{1}{(1-\eta\tau)^{k}}\langle x_{k}-x_*, T_{\lambda}(x_{k})\rangle+\frac{1-\tau}{2\tau}\|x_{k}-x_{k-1}\|^2+\frac{1}{2}(\|x_{k}-x_*\|^2-\|x_{k-1}-x_*\|^2)\Big)\\&+\vert\langle
	v_{k+1}-x_*, g_k
	\rangle\vert+\frac{1+\beta\tau-\tau}{\tau(1-\eta\tau)^{k+1}}\langle x_{k+1}-x_*, T_{\lambda}(x_{k+1})\rangle+\beta\frac{1-\tau}{2\tau}\|x_{k+1}-x_{k}\|^2\\&+	\frac{2\tau-1}{\tau}
	\frac{1}{(1-\eta\tau)^{k}}\langle x_{k}-x_*, T_{\lambda}(x_{k})\rangle+\frac{1-\tau}{2\tau}\|x_{k}-x_{k-1}\|^2.
	\end{aligned}
	\]
	\end{footnotesize}	
	According to Lemma \ref{L5} presented in the Appendix, by	setting
\begin{equation*}
	\begin{aligned}
	&a_k:=\frac{1}{2}\|x_{k}-x_*\|^2,~ t_k:=\frac{1-\tau}{\tau},~ \eta:=\beta+1,\\&
	b_k:=	
	\frac{1}{(1\!-\!\eta\tau)^{k}}\langle x_{k}\!-\!x_*, T_{\lambda}(x_{k})\rangle\!+\!\frac{1\!-\!\tau}{2\tau}\|x_{k}\!-\!x_{k-1}\|^2\!+\!\frac{1}{2}(\|x_{k}\!-\!x_*\|^2\!-\!\|x_{k-1}\!-\!x_*\|^2),\\&
	d_k:=\vert\langle
	v_{k+1}-x_*, g_k
	\rangle\vert+\frac{1+\beta\tau-\tau}{\tau(1-\eta\tau)^{k+1}}\langle x_{k+1}-x_*, T_{\lambda}(x_{k+1})\rangle+\beta\frac{1-\tau}{2\tau}\|x_{k+1}-x_{k}\|^2\\&~~~~~~~+	\frac{2\tau-1}{\tau}
	\frac{1}{(1-\eta\tau)^{k}}\langle x_{k}-x_*, T_{\lambda}(x_{k})\rangle+\frac{1-\tau}{2\tau}\|x_{k}-x_{k-1}\|^2,
	\end{aligned}
	\end{equation*}

we can verify that the first and the second inequalities in \eqref{A7} hold.
Since $\eta\in(0,1)$ and $\beta>0$, conditions \textnormal{({2})},  \textnormal{({3})}, and    \textnormal{({6})} in Lemma \ref{L9} hold, along with the summability of $\vert\langle
v_{k+1}-x_*, g_k\rangle\vert$ (see \textnormal{({2})} in Lemma \ref{L9}),   we have that $\{d_k\}$ is summable. 	
Therefore, $\|x_k-x_*\|^2$ is convergent by using Lemma \ref{L5}.

Next, we will prove the second condition required in Lemma \ref{L4}.   It follows from $\|T_{\lambda}(x_{k+1})\|\leq  O\big((1-\eta\tau)^{k+1}\big)$ that $\lim\limits_{k\rightarrow+\infty}\|T_{\lambda}(x_k)\|=0$. Therefore, every weak cluster point of $x_k$ belongs to the solution of \eqref{B} by using Lemma \ref{L6}.
\end{proof}

\section{Conclusion}
To find a zero point of a maximally monotone operator $T$, instead of using PPA \eqref{PPA} to solve $T$, we propose employing PPA to solve its Yosida regularization $T_{\lambda}$.  Based on an $O(a_{k+1})$ ($a_{k+1}\geq \varepsilon>0$) resolvent index of $T$, we can achieve a convergence rate of
$O (1/{\sqrt{\sum_{i=0}^{k}a_{i+1}^2}} )$  for both  $\|T_{\lambda}(\cdot)\|$ and the gap function $\mathtt{Gap}(\cdot)$ in the non-ergodic sense. Moreover, we can achieve a convergence rate of  $O(1/\sum_{i=0}^{k}a_{i+1})$ for $\mathtt{Gap}(\cdot)$ in the ergodic sense. To improve the convergence rate of the newly-proposed PPA, we introduce an accelerated variant called  Contracting PPA. By leveraging a resolvent index of $T$ bounded by $O(a_{k+1})$ ($a_{k+1}\geq \varepsilon>0$), we establish a convergence rate of $O(1/\sum_{i=0}^{k}a_{i+1})$ for both $\|T_{\lambda}(\cdot)\|$ and
  $\mathtt {Gap}(\cdot)$, considering the non-ergodic sense.
It is important to note that the Contracting PPA does not provide a guarantee of convergence. Finally, we propose two new versions of the Contracting PPA with convergence guarantees. These updated algorithms have sublinear and linear convergence rates, respectively. One potential application of the Contracting PPA is to design Gauss-Seidel type accelerated algorithms with constant penalty parameters for solving the multi-block separable convex optimization problem with equality constraints:
\[
\min\limits_{x_i\in\mathcal{X}_i}\left\{\sum_{i=1}^{m}f_i(x_i):~ \sum_{i=1}^{m}A_ix_i=b\right\},
\]
where $m\geq1$, $f_i:\mathbb{R}^{n_i}\rightarrow\mathbb{R}$ is a closed proper convex function, and $\mathcal{X}_i\subseteq\mathbb{R}^{n_i}$ is a closed convex set.


\appendix

\section{Some auxiliary results}

In the Appendix, we present some lemmas used throughout this paper.

\begin{lemma}{\rm(\cite[Lemma A.8]{2017Convergence})}\label{L1}
	Let $c\geq0$ and $\{a_k\}$ and $\{b_k\}$ be nonnegative sequences (for $k\geq0$). If  $\{b_k\}$ is summable and $$
	a_k^2\leq c^2+\sum_{i=0}^{k}a_ib_i,
	$$then $a_k\leq c+\sum_{i=0}^{\infty}b_i$.
\end{lemma}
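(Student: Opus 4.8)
The plan is to reduce the implicit quadratic inequality to a clean telescoping estimate for the square root of the accumulated right-hand side. First I would introduce the partial-sum sequence
$$S_k := c^2 + \sum_{i=0}^{k} a_i b_i, \qquad S_{-1} := c^2,$$
so that the hypothesis reads $a_k^2 \leq S_k$, i.e. $a_k \leq \sqrt{S_k}$ (all quantities being nonnegative, so the square root is legitimate). Since $a_i b_i \geq 0$, the sequence $\{S_k\}$ is nondecreasing, and by construction $S_k - S_{k-1} = a_k b_k$ for every $k \geq 0$, with the base value $S_{-1} = c^2$ corresponding to the empty sum.

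The core step is to show that consecutive square roots differ by at most $b_k$, namely $\sqrt{S_k} - \sqrt{S_{k-1}} \leq b_k$. When $S_k > 0$ I would rationalize and then invoke the hypothesis:
$$\sqrt{S_k} - \sqrt{S_{k-1}} = \frac{S_k - S_{k-1}}{\sqrt{S_k} + \sqrt{S_{k-1}}} \leq \frac{a_k b_k}{\sqrt{S_k}} \leq b_k,$$
where the first inequality drops the nonnegative term $\sqrt{S_{k-1}}$ from the denominator, and the last inequality uses precisely $a_k \leq \sqrt{S_k}$. The degenerate case $S_k = 0$ forces $S_{k-1} = 0$ as well, so the claimed estimate holds trivially there.

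Finally I would telescope this one-step bound from $0$ to $k$, obtaining $\sqrt{S_k} - \sqrt{S_{-1}} \leq \sum_{i=0}^{k} b_i$, that is,
$$\sqrt{S_k} \leq c + \sum_{i=0}^{k} b_i \leq c + \sum_{i=0}^{\infty} b_i.$$
Combining this with $a_k \leq \sqrt{S_k}$ yields $a_k \leq c + \sum_{i=0}^{\infty} b_i$, and the summability of $\{b_k\}$ guarantees the right-hand side is finite, which is the assertion.

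The argument is a discrete Gronwall/Bihari-type lemma and needs no structure beyond nonnegativity and summability, so there is no genuine obstacle; the only points demanding care are the self-referential use of the hypothesis $a_k \leq \sqrt{S_k}$ inside the single-step estimate and the separate treatment of the degenerate case $S_k = 0$.
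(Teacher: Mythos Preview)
Your argument is correct. Note, however, that the paper does not supply its own proof of this lemma: it is quoted verbatim as \cite[Lemma A.8]{2017Convergence} and used as a black box in the proofs of Theorem~\ref{thm2.1} and Lemma~\ref{LC}. So there is no ``paper's proof'' to compare against here; what you have written is precisely the standard discrete Bihari--Gronwall argument that underlies the cited reference, and it goes through without issue.
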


\begin{lemma}\label{L2}
Let $\{a_k\}$ and $\{\eta_k\}$ be nonnegative sequences (for $k\geq0$). If $$
a_{k+1}\leq  (1-\eta_k)a_k+\eta_kc,~c>0~{\rm and} ~\eta_k\in[0,1],
$$ then $a_k\leq \max \{a_0,c\}$ is bounded.
\end{lemma}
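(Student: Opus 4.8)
The plan is to prove the bound $a_k \le \max\{a_0,c\}$ by induction on $k$, exploiting the fact that the recursion writes $a_{k+1}$ as (a quantity dominated by) a convex combination of $a_k$ and $c$. First I would set $M := \max\{a_0,c\}$ and record the base case $a_0 \le M$, which holds trivially by the definition of $M$.

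For the inductive step I would assume $a_k \le M$ and show $a_{k+1} \le M$. The key observation is that, because $\eta_k \in [0,1]$, both coefficients $1-\eta_k$ and $\eta_k$ are nonnegative and sum to $1$. Combining the inductive hypothesis $a_k \le M$ with the trivial bound $c \le M$, I would estimate
$$
a_{k+1} \le (1-\eta_k)a_k + \eta_k c \le (1-\eta_k)M + \eta_k M = M,
$$
which closes the induction and yields $a_k \le M$ for every $k \ge 0$, hence boundedness of $\{a_k\}$.

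The argument is entirely elementary and I do not anticipate any genuine obstacle. The only point requiring a small amount of care is the nonnegativity of the coefficient $1-\eta_k$, which is precisely where the assumption $\eta_k \le 1$ enters; without it the monotonicity used in the middle inequality would fail. The hypotheses that $\{a_k\}$ is nonnegative and $c>0$ further ensure $M>0$, so the asserted bound is well-defined.
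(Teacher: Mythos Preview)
Your proposal is correct and matches the paper's own proof almost verbatim: both argue by induction, using that $(1-\eta_k)a_k+\eta_k c$ is a convex combination bounded by $\max\{a_0,c\}$. The only cosmetic difference is that the paper phrases the hypothesis as strong induction, but since the recursion involves only $a_k$, this is immaterial.
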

\begin{proof}
	We prove this conclusion by induction. First we have $a_0\leq \max \{a_0,c\}$. Suppose $a_i\leq \max \{a_0,c\}$ for $i\in\{0,1,2,\cdots,k\}$, then $$
	a_{k+1}\leq   (1-\eta_k)a_k+\eta_kc\leq  (1-\eta_k)\max \{a_0,c\}+\eta_kc\leq \max \{a_0,c\}.
	$$We obtain the conclusion.
\end{proof}

\begin{lemma}\label{L3}
	Suppose the nonnegative real sequences  $\{a_k\}$  and $\{d_k\}$  satisfy that $
	a_{k+1}\leq a_k +d_k$ and $\sum_{k=0}^{\infty}d_k<\infty$.
	Then $\{a_k\}$ is convergent.
\end{lemma}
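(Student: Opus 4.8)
The plan is to reduce the convergence of the quasi-monotone sequence $\{a_k\}$ to the ordinary monotone convergence theorem by subtracting off the accumulated perturbation. Since $\sum_{k=0}^{\infty} d_k < \infty$, I would first set $S := \sum_{i=0}^{\infty} d_i < \infty$ and introduce the partial sums $s_k := \sum_{i=0}^{k-1} d_i$ (with the convention $s_0 := 0$), observing that $0 \le s_k \le S$ and that $s_k \to S$ as $k \to \infty$.

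The key step is to define the auxiliary sequence $b_k := a_k - s_k$ and to show that it is non-increasing. Indeed, from the hypothesis $a_{k+1} \le a_k + d_k$ together with $s_{k+1} = s_k + d_k$, I obtain $b_{k+1} = a_{k+1} - s_{k+1} \le (a_k + d_k) - (s_k + d_k) = a_k - s_k = b_k$. Moreover, since $a_k \ge 0$ and $s_k \le S$, this sequence is bounded below: $b_k = a_k - s_k \ge -S$. A non-increasing real sequence that is bounded below converges, so there exists $b_* \in \mathbb{R}$ with $b_k \to b_*$.

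Finally, I would recover the convergence of $\{a_k\}$ itself by writing $a_k = b_k + s_k$ and passing to the limit. Because $b_k \to b_*$ and $s_k \to S$, it follows that $a_k \to b_* + S$, which establishes that $\{a_k\}$ is convergent and completes the argument.

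There is no genuine obstacle in this lemma; the entire content lies in selecting the correct monotone surrogate $b_k = a_k - s_k$, after which the monotone convergence theorem does all the work. The only point demanding a little care is the index bookkeeping in the partial sums, namely ensuring that the increment $s_{k+1} - s_k = d_k$ exactly cancels the perturbation term $d_k$ in the recursion so that the telescoping goes through cleanly.
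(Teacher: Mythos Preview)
Your argument is correct: the substitution $b_k = a_k - s_k$ produces a non-increasing sequence bounded below by $-S$, and the convergence of $a_k$ follows immediately from $a_k = b_k + s_k$. There is no comparison to make with the paper's own proof, since the paper states Lemma~\ref{L3} without proof as a standard auxiliary result; your write-up supplies exactly the classical justification one would expect.
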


The following lemma is used to establish the convergence of some given sequence. The proof is similar to \cite[Lemma 4.1]{2021Fast}.
\begin{lemma}\label{L5}
	Let $\{a_k\}\in\mathbb{R}$, $\{b_k\}\in\mathbb{R}$ and $\{d_k\}\in\mathbb{R}$ be real sequences such that $\{a_k\}$ and $\{d_k\}$ are nonnegative, and
	\begin{equation}\label{A7}
	a_{k+1}\leq a_k+b_{k+1},~~
	(t_{k+1}+\eta)b_{k+1}\leq t_k b_k+d_k ~~{\rm and}~~
	\sum_{k=0}^{\infty}d_k<+\infty,~~t_k,~\eta>0.
	\end{equation}
Then $\{a_k\}$ is convergent.
\end{lemma}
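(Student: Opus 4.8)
The plan is to control the positive parts of the (possibly sign-indefinite) sequence $\{b_k\}$ so that the first inequality in \eqref{A7} reduces to the hypothesis of Lemma \ref{L3}. Write $[x]_+ := \max\{x,0\}$. The first step is to convert the second recursion in \eqref{A7} into a recursion for $[b_k]_+$. Since $t_k>0$, we always have $t_kb_k \le t_k[b_k]_+$, and distinguishing the two cases $b_{k+1}\le 0$ and $b_{k+1}>0$ shows that
\[
(t_{k+1}+\eta)[b_{k+1}]_+ \le t_k[b_k]_+ + d_k
\]
holds for every $k$: when $b_{k+1}\le 0$ the left-hand side vanishes and the right-hand side is nonnegative, while when $b_{k+1}>0$ it coincides with the given inequality after enlarging $t_kb_k$ to $t_k[b_k]_+$.

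Second, I would introduce the auxiliary nonnegative sequence $c_k := t_k[b_k]_+ \ge 0$ and rewrite the previous display as $\eta[b_{k+1}]_+ \le c_k - c_{k+1} + d_k$. Telescoping this from $k=0$ to $N-1$ and discarding the nonnegative term $c_N$ yields
\[
\eta\sum_{k=1}^{N}[b_k]_+ \le c_0 + \sum_{k=0}^{N-1}d_k \le c_0 + \sum_{k=0}^{\infty}d_k.
\]
Because $\eta>0$ and $\sum_k d_k<+\infty$ by hypothesis, letting $N\to\infty$ gives $\sum_{k\ge 1}[b_k]_+<+\infty$; that is, the positive parts of $\{b_k\}$ form a summable sequence.

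Finally, bounding $b_{k+1}\le[b_{k+1}]_+$ in the first inequality of \eqref{A7} gives $a_{k+1}\le a_k + [b_{k+1}]_+$, where $\{a_k\}$ is nonnegative and $\{[b_{k+1}]_+\}$ is summable by the previous step. Lemma \ref{L3} (applied with its $d_k$ taken to be $[b_{k+1}]_+$) then yields the convergence of $\{a_k\}$, completing the argument.

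I expect the only genuinely delicate point to be the first step: since $\{b_k\}$ is not assumed nonnegative, one cannot telescope the second recursion directly, and the sign of $b_k$ must be handled by passing to the positive parts. Verifying that the positive-part recursion survives both sign cases — in particular that $t_k>0$ is exactly what lets one replace $t_kb_k$ by $t_k[b_k]_+$ — is where the hypotheses are used in full; the remaining telescoping and the appeal to Lemma \ref{L3} are routine.
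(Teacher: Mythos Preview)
Your proof is correct and follows essentially the same approach as the paper: pass to positive parts in the second inequality using $t_kb_k\le t_k[b_k]_+$ and the nonnegativity of the right-hand side, telescope (your $c_k$ is just the paper's $t_k[b_k]_+$) to obtain summability of $[b_k]_+$, and then apply Lemma~\ref{L3} to the first inequality.
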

\begin{proof}
	Based on the second inequality given in \eqref{A7}, we have
	$$
(t_{k+1}+\eta)	b_{k+1}\leq t_k b_k+d_k \leq t_k [b_k]_++d_k,
	$$where $[\cdot]_+$ denotes the positive part. Because the right side of the above inequality is nonnegative, then we obtain {
	$$
(t_{k+1}+\eta)	[	b_{k+1}]_+\leq t_k [b_k]_++d_k.
	$$Then it gives
\[
\sum_{k=0}^{\infty} [b_k]_+\le\frac{1}{\eta}\sum_{k=0}^{\infty}(t_k [b_k]_+- t_{k+1} 	[	b_{k+1}]_++d_k)
<\frac{t_0 [b_0]_+}{\eta}+\frac{1}{\eta}\sum_{k=0}^{\infty}d_k <+\infty.
\]
}
According to the first inequality given in \eqref{A7}, it holds that
	$$
	a_{k+1}\leq a_k+b_{k+1}\leq a_k+[b_{k+1}]_+.
	$$By using Lemma \ref{L3}, we obtain that $\{a_k\}$ is convergent.
\end{proof}

The  following   lemma, known as Opial's Lemma, is used  to establish that the iteration sequences  converge weakly to a saddle point of \eqref{B}.
\begin{lemma}{\rm(\cite[Theorem 5.5]{Bauschke2011Convex})} \label{L4}
Let the sequence \textnormal{$\{x_k\}\in\mathcal{H}$} satisfies the following two conditions.
	\begin{itemize}
		\item[\textnormal{({1})}]  For  $x_*\in T^{-1}(0)$, $\lim\limits_{k\rightarrow\infty}\|x_k-x_*\|$ exists.
		\item[\textnormal{({2})}]Every weak cluster of  \textnormal{$\{x_k\}_{k\in\mathbb{N}}$} belongs to $T^{-1}(0)$.
	\end{itemize}
	Then  \textnormal{$\{x_k\}_{k\in\mathbb{N}}$} converges weakly to a point of $T^{-1}(0)$.
\end{lemma}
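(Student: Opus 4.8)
The plan is to run the classical Opial argument in two stages: first deduce boundedness together with the existence of weak cluster points, then show that there is only one such cluster point. Since $T^{-1}(0)$ is assumed nonempty, condition (1) applies to some (indeed every) $x_*\in T^{-1}(0)$ and gives that $\lim_{k\to\infty}\|x_k-x_*\|$ exists and is finite; in particular $\{\|x_k-x_*\|\}$ is bounded, so $\{x_k\}$ is a bounded sequence in the Hilbert space $\mathcal{H}$. By reflexivity of $\mathcal{H}$ (Banach--Alaoglu), every bounded sequence admits weakly convergent subsequences, so the set of weak cluster points of $\{x_k\}$ is nonempty, and by condition (2) each of them lies in $T^{-1}(0)$.

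The core of the argument is uniqueness of the weak cluster point. Suppose $p$ and $q$ are two weak cluster points, realized along subsequences $x_{k_j}\rightharpoonup p$ and $x_{m_j}\rightharpoonup q$; by condition (2) both $p,q\in T^{-1}(0)$. I would expand
\[
\|x_k-p\|^2-\|x_k-q\|^2=2\langle x_k,\,q-p\rangle+\|p\|^2-\|q\|^2 .
\]
Because $p,q\in T^{-1}(0)$, condition (1) guarantees that both $\|x_k-p\|$ and $\|x_k-q\|$ converge, so the left-hand side converges; hence $\langle x_k,\,q-p\rangle$ converges to a single limit $\ell$. Evaluating this limit along $x_{k_j}\rightharpoonup p$ gives $\ell=\langle p,\,q-p\rangle$, while evaluating it along $x_{m_j}\rightharpoonup q$ gives $\ell=\langle q,\,q-p\rangle$. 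Subtracting yields $\langle q-p,\,q-p\rangle=\|q-p\|^2=0$, so $p=q$.

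Finally I would combine the two facts. The sequence $\{x_k\}$ is bounded and has a unique weak cluster point $\bar x\in T^{-1}(0)$; a bounded sequence with a single weak cluster point converges weakly to it. Indeed, if $x_k$ did not converge weakly to $\bar x$, some weak neighborhood $V$ of $\bar x$ would be left infinitely often, yielding a subsequence outside $V$ which, being bounded, would have a weak cluster point necessarily distinct from $\bar x$, contradicting uniqueness. Hence $x_k\rightharpoonup\bar x\in T^{-1}(0)$, as claimed. The main obstacle is the uniqueness step: the delicate point is that condition (1) must be invoked for the cluster points $p$ and $q$ \emph{themselves}, which is legitimate only because condition (2) places them in $T^{-1}(0)$; this is exactly what makes both length sequences convergent and pins the cross term $\langle x_k,\,q-p\rangle$ to one value.
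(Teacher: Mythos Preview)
Your argument is correct and is precisely the classical Opial proof: boundedness from condition~(1), existence of weak cluster points by reflexivity, uniqueness via the polarization identity $\|x_k-p\|^2-\|x_k-q\|^2=2\langle x_k,q-p\rangle+\|p\|^2-\|q\|^2$ applied to two cluster points $p,q\in T^{-1}(0)$, and finally the standard fact that a bounded sequence with a single weak cluster point converges weakly to it.

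The paper, however, does not supply its own proof of this lemma at all; it simply quotes it as \cite[Theorem~5.5]{Bauschke2011Convex}. So there is nothing to compare against beyond noting that your write-up is exactly the argument one finds in that reference (and in Opial's original work). Your remark that condition~(2) is what legitimizes invoking condition~(1) at the cluster points $p$ and $q$ is the essential observation, and you have it right.
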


\begin{lemma}\label{L6}
For given sequence $\{x_k\}$,	if $ \lim\limits_{k\rightarrow+\infty}\|T_{\lambda}(x_k)\|=0$ and there exists a subsequence $\{x_{k_n}\}$ that converges to $\bar{x}$ as $n\rightarrow+\infty$, then $0\in T(\bar x)$.
\end{lemma}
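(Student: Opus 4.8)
The plan is to exploit the fundamental relationship between the Yosida regularization $T_{\lambda}$ and the resolvent $J_{\lambda T}$, together with the maximality of $T$. First I would introduce the auxiliary points $u_k := J_{\lambda T}(x_k)$. From the defining identity \eqref{BY}, namely $T_{\lambda}(x_k)=\frac{1}{\lambda}(x_k-J_{\lambda T}(x_k))$, we obtain $u_k = x_k-\lambda T_{\lambda}(x_k)$, and moreover the standard inclusion $T_{\lambda}(x_k)\in T(u_k)$: indeed $u_k=J_{\lambda T}(x_k)$ means $x_k\in u_k+\lambda T(u_k)$, so $\frac{x_k-u_k}{\lambda}=T_{\lambda}(x_k)\in T(u_k)$. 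Thus each pair $(u_k,T_{\lambda}(x_k))$ lies in the graph of $T$.

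Next I would track what happens along the convergent subsequence $\{x_{k_n}\}$. Since $\|T_{\lambda}(x_k)\|\to 0$, we have $\lambda T_{\lambda}(x_{k_n})\to 0$ strongly; combined with $x_{k_n}\to \bar x$ this yields $u_{k_n}=x_{k_n}-\lambda T_{\lambda}(x_{k_n})\to \bar x$. Hence we have a sequence of graph pairs $(u_{k_n},T_{\lambda}(x_{k_n}))$ whose first coordinates converge to $\bar x$ and whose second coordinates $T_{\lambda}(x_{k_n})$ converge strongly to $0$.

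The final, and main, step is to pass to the limit using the monotonicity and maximality of $T$. For an arbitrary $(y,v)\in\mathrm{graph}(T)$, monotonicity gives
\[
\langle u_{k_n}-y,\ T_{\lambda}(x_{k_n})-v\rangle\geq 0.
\]
In the expansion $\langle u_{k_n}-y, T_{\lambda}(x_{k_n})\rangle-\langle u_{k_n}-y, v\rangle\geq 0$, the first term tends to $0$ (the factor $u_{k_n}-y$ is bounded, while $T_{\lambda}(x_{k_n})\to 0$), whereas the second converges to $\langle \bar x-y, v\rangle$. Letting $n\to\infty$ therefore yields $\langle \bar x-y,\ 0-v\rangle\geq 0$ for every $(y,v)\in\mathrm{graph}(T)$. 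By the maximality of $T$, the pair $(\bar x,0)$ cannot be monotonically compatible with the entire graph unless it already belongs to it; hence $0\in T(\bar x)$.

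The crux is this last invocation of maximality: monotonicity alone only delivers the limiting inequality, and it is precisely maximality that upgrades ``$(\bar x,0)$ is monotonically compatible with $\mathrm{graph}(T)$'' to ``$(\bar x,0)\in\mathrm{graph}(T)$''. This is nothing but the weak--strong (demi-)closedness of the graph of a maximal monotone operator (see \cite{Bauschke2011Convex}). I would also note that if the subsequence convergence in the hypothesis is only weak, as in the applications through Opial's Lemma \ref{L4} where $u_{k_n}\rightharpoonup \bar x$, the same computation goes through verbatim: the boundedness of $\{u_{k_n}\}$ still annihilates the first term, and weak convergence still handles the second.
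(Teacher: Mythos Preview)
Your proof is correct and follows essentially the same route as the paper: both set $u_k=x_k-\lambda T_{\lambda}(x_k)$, observe $T_{\lambda}(x_k)\in T(u_k)$, and pass to the limit using the (demi-)closedness of the graph of a maximally monotone operator. The only difference is that you spell out the maximality/monotonicity argument explicitly, whereas the paper simply invokes ``the closed graph property of a maximally monotone operator''.
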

\begin{proof}
	According to the given conditions, we have $ \lim\limits_{n\rightarrow+\infty}\|T_{\lambda}(x_{k_n})\|=0$, which means that $T_{\lambda}(x_{k_n})\in T(x_{k_n}-\lambda T_{\lambda}(x_{k_n}))$. By taking the limit as  $n\rightarrow+\infty$ and  considering the closed graph property of a maximally monotone operator, we can derive the desired conclusion.
\end{proof}

\section{Proof of Lemma \ref{L7}} \label{APb}
\begin{proof}
	Using basic algebraic operations, we can verify the following conclusions:
	\begin{small}
		\begin{eqnarray*}
			&&\begin{aligned}
				{\rm(i)}&\langle x_*-v_{k+1}, \frac{1}{\tau_k^p}T_{\lambda}(x_{k+1})\rangle
				=\langle x_*-x_{k+1}-\frac{1-\tau_k}{\tau_k}(x_{k+1}-x_k), \frac{1}{\tau_k^p}T_{\lambda}(x_{k+1})\rangle\\=&
				\frac{1}{\tau_k^p}\langle x_*-x_{k+1}, T_{\lambda}(x_{k+1})\rangle-\frac{1-\tau_k}{\tau_k^{p+1}}\langle
				x_{k+1}-x_k,T_{\lambda}(x_{k+1})
				\rangle;
			\end{aligned}\\
			&&\begin{aligned}
				{\rm(ii)}& \langle x_*-v_{k+1}, -\frac{1-\tau_k}{\tau_k^p}T_{\lambda}(x_{k}) \rangle
				= \langle x_*-x_{k}-\frac{1}{\tau_k}(x_{k+1}-x_k), -\frac{1-\tau_k}{\tau_k^p}T_{\lambda}(x_{k}) \rangle\\=&
				-\frac{1-\tau_k}{\tau_k^p}\langle x_*-x_{k}, T_{\lambda}(x_{k})\rangle+\frac{1-\tau_k}{\tau_k^{p+1}}\langle
				x_{k+1}-x_k,T_{\lambda}(x_{k})
				\rangle\\
				\overset{\eqref{B12}}{\leq}&- (\frac{1}{\tau_{k-1}^p}-\frac{\mu}{\tau_k^{p-1}} )\langle x_*-x_{k}, T_{\lambda}(x_{k})\rangle+\frac{1-\tau_k}{\tau_k^{p+1}}\langle
				x_{k+1}-x_k,T_{\lambda}(x_{k})
				\rangle;
			\end{aligned}\\
			&&{\rm(iii)}\langle x_*-v_{k+1}, v_{k+1}-v_k\rangle=-\frac{1}{2}(\|v_{k+1}-x_*\|^2-\|v_k-x_*\|^2+\|v_{k+1}-v_k\|^2);
			\\
			&&\begin{aligned}
				{\rm(iv)}	&\langle x_*-v_{k+1}, x_{k+1}-x_k\rangle=
				\langle   x_*-x_{k+1}-\frac{1-\tau_k}{\tau_k}(x_{k+1}-x_k),   x_{k+1}-x_k\rangle \rangle\\=&
				-\frac{1}{2}(\|x_{k+1}-x_*\|^2-\|x_k-x_*\|^2)-\frac{2-\tau_k}{2\tau_k}\|x_{k+1}-x_k\|^2.
			\end{aligned}
		\end{eqnarray*}
	\end{small}
	Adding the above four (in)equalities together yields the following condition:
	\begin{small}
		\begin{equation}\nonumber
		\begin{aligned}
		0=& \langle x_*-v_{k+1}, \frac{1}{\tau_k^p}T_{\lambda}(x_{k+1})-\frac{1-\tau_k}{\tau_k^p}T_{\lambda}(x_{k})+(v_{k+1}-v_k)+\beta(x_{k+1}-x_k)+g_k \rangle\\ \leq&\underbrace{\frac{1}{\tau_k^p}\langle x_*-x_{k+1}, T_{\lambda}(x_{k+1})\rangle-\frac{1}{2}\|v_{k+1}-x_*\|^2-\frac{\beta}{2}\|x_{k+1}-x_*\|^2}_{-\varTheta'_{k+1}(x_*)}
		\\&-\underbrace{ (\frac{1}{\tau_{k-1}^p}\langle x_*-x_{k}, T_{\lambda}(x_{k})\rangle-\frac{1}{2}\|v_{k}-x_*\|^2-\frac{\beta}{2}\|x_{k}-x_*\|^2 )}_{-\varTheta'_{k}(x_*)}\\&-
		\left(
		\frac{1-\tau_k}{\tau_k^{p+1}}\langle x_{k+1}-x_k, T_{\lambda}(x_{k+1})-T_{\lambda}(x_{k}) \rangle
		+\frac{1}{2}\|v_{k+1}-v_k\|^2\right.\\&\left.+\frac{\mu}{\tau_k^{p-1}}\langle x_{k}-x_*, T_{\lambda}(x_{k})\rangle+\beta\frac{2-\tau_k}{2\tau_k}\|x_{k+1}-x_k\|^2+	\langle
		x_*-v_{k+1}, g_k
		\rangle
		\right).
		\end{aligned}
		\end{equation}
	\end{small}
	\noindent Using some rearrangement of the inequality above, we obtain \eqref{B14}.
	The remaining conclusions are similar to those in Lemma \ref{LC}, so we will omit them here.
\end{proof}

\section{Proof of Lemma \ref{L9}}\label{Apc}
\begin{proof}
	Using basic algebraic operations, we can verify the following conclusions:
	\begin{eqnarray*}\label{B25}
		&&	\begin{aligned}
			{\rm(i)}& \langle x_*-v_{k+1}, \frac{1}{(1-\eta\tau)^{k+1}}T_{\lambda}(x_{k+1}) \rangle
			\\=& \langle x_*-x_{k+1}-\frac{1-\tau}{\tau}(x_{k+1}-x_k), \frac{1}{(1-\eta\tau)^{k+1}}T_{\lambda}(x_{k+1}) \rangle\\=&
			\frac{1}{(1-\eta\tau)^{k+1}}\langle x_*-x_{k+1}, T_{\lambda}(x_{k+1})\rangle-\frac{1-\tau}{\tau(1-\eta\tau)^{k+1}}\langle
			x_{k+1}-x_k,T_{\lambda}(x_{k+1})
			\rangle;
		\end{aligned}\\
		\label{B26}
		&&	\begin{aligned}
			{\rm(ii)}& \langle x_*-v_{k+1}, -\frac{1-\tau}{(1-\eta\tau)^{k+1}}T_{\lambda}(x_{k}) \rangle
			\\=& \langle x_*-x_{k}-\frac{1}{\tau}(x_{k+1}-x_k), -\frac{1-\tau}{(1-\eta\tau)^{k+1}}T_{\lambda}(x_{k}) \rangle\\=&
			-\frac{1-\tau}{(1-\eta\tau)^{k+1}}\langle x_*-x_{k}, T_{\lambda}(x_{k})\rangle+\frac{1-\tau}{\tau(1-\eta\tau)^{k+1}}\langle
			x_{k+1}-x_k,T_{\lambda}(x_{k})
			\rangle\\=&- (\frac{1}{(1-\eta\tau)^{k}}-\frac{\tau(1-\eta)}{(1-\eta\tau)^{k+1}} )\langle x_*-x_{k}, T_{\lambda}(x_{k})\rangle\\+&\frac{1-\tau}{\tau(1-\eta\tau)^{k+1}}\langle
			x_{k+1}-x_k,T_{\lambda}(x_{k})
			\rangle;
		\end{aligned} \\ \label{B27}
		&&{\rm(iii)}	\langle x_*-v_{k+1}, v_{k+1}-v_k\rangle=-\frac{1}{2}(\|v_{k+1}-x_*\|^2-\|v_k-x_*\|^2+\|v_{k+1}-v_k\|^2);\\
		\label{B28}
		&&\begin{aligned}
			{\rm(iv)}&\langle x_*-v_{k+1}, x_{k+1}-x_k\rangle=
			\langle   x_*-x_{k+1}-\frac{1-\tau}{\tau}(x_{k+1}-x_k),   x_{k+1}-x_k\rangle  \rangle\\=&
			-\frac{1}{2}(\|x_{k+1}-x_*\|^2-\|x_k-x_*\|^2)-\frac{2-\tau}{2\tau}\|x_{k+1}-x_k\|^2.
		\end{aligned}
	\end{eqnarray*}
	Adding the above four (in)equalities together yields the following condition:
	\[
	\begin{aligned}
	0=& \langle x_*-v_{k+1}, \frac{1}{(1-\eta\tau)^{k+1}}T_{\lambda}(x_{k+1})-\frac{1-\tau}{(1-\eta\tau)^{k+1}}T_{\lambda}(x_{k}) \\& +(v_{k+1}-v_k)+\beta(x_{k+1}-x_k)+g_k \rangle\\=&\underbrace{\frac{1}{(1-\eta\tau)^{k+1}}\langle x_*-x_{k+1}, T_{\lambda}(x_{k+1})\rangle-\frac{1}{2}\|v_{k+1}-x_*\|^2-\frac{\beta}{2}\|x_{k+1}-x_*\|^2}_{-\varTheta''_{k+1}(x_*)}
	\\&-\underbrace{ (\frac{1}{(1-\eta\tau)^{k}}\langle x_*-x_{k}, T_{\lambda}(x_{k})\rangle-\frac{1}{2}\|v_{k}-x_*\|^2-\frac{\beta}{2}\|x_{k}-x_*\|^2 )}_{-\varTheta''_{k}(x_*)}\\&-
	(
	\frac{1-\tau}{\tau(1-\eta\tau)^{k+1}}\langle x_{k+1}-x_k, T_{\lambda}(x_{k+1})-T_{\lambda}(x_{k}) \rangle
	+\frac{1}{2}\|v_{k+1}-v_k\|^2 \\& +\frac{\tau(1-\eta)}{(1-\eta\tau)^{k+1}}\langle x_{k}-x_*, T_{\lambda}(x_{k})\rangle+\beta\frac{2-\tau}{2\tau}\|x_{k+1}-x_k\|^2+	\langle
	x_*-v_{k+1}, g_k
	\rangle  ).
	\end{aligned}
	\]
	Then, by rearranging the above inequality, we can obtain \eqref{B24}. The remaining conclusions are similar to Lemma  \ref{LC}, so we will omit them.
\end{proof}

\section*{Statements and Declarations}
\begin{itemize}
\item Competing Interests\\
The authors declared that they have no competing interests to this work.
\item Availability of data and materials\\
There is no availability of supporting data and materials.
\end{itemize}

\bibliographystyle{plain}
\bibliography{ref}

\end{document}